\documentclass{amsart}

\usepackage[dvipsnames]{color}
\definecolor{green}{rgb}{0.0, 0.5, 0.05}
\definecolor{teal}{rgb}{0.0, 0.5, 0.5}
\definecolor{purple}{rgb}{0.5, 0.0, 0.5}
\definecolor{fuchsia}{rgb}{1.0, 0.0, 1.0}

\usepackage{url}
\usepackage{amssymb}
\usepackage{mathtools}
\usepackage{xypic}
\usepackage{hyperref}
\usepackage{etoolbox}
\usepackage{comment}

\newtheorem{theorem}{Theorem}[section]

\newtheorem{lemma}[theorem]{Lemma}
\newtheorem{corollary}[theorem]{Corollary}
\theoremstyle{definition}

\newtheorem{remark}[theorem]{Remark}
\newtheorem{example}[theorem]{Example}
\theoremstyle{plain}

\numberwithin{equation}{section}

\def\Q{\mathbb Q}

\def\Z{\mathbb Z}

\def\ge{\geqslant}
\def\le{\leqslant}

\begin{document}

\title{Paratrophic Determinants over $\mathbb{Z}/N\mathbb{Z}$ via Discrete Fourier Transform}

\author{Hang LIU}
\address{School of Mathematical Sciences, Shenzhen University, Shenzhen, 518060, Guangdong, P. R. China}
\email{liuhang@szu.edu.cn}

\begin{abstract}
In this note, we investigate the paratrophic determinants attached to the multiplicative semigroup $\mathbb{Z}/N\mathbb{Z}$. 
We show that, via discrete Fourier, cosine, and sine transforms, these determinants factor into products of group determinants indexed by $d \mid N$. 
This yields explicit formulas for several determinant families, including determinants involving periodic Bernoulli functions and powers of the tangent function.
As an application, we also prove a corrected version of a conjecture of Sun Zhi-Wei.
\end{abstract}

\keywords{Paratrophic determinant, discrete Fourier transform, Bernoulli polynomial, $L$-function}
\subjclass[2020]{Primary 11C20; Secondary 11B68, 11M06, 20M25.}

\maketitle

\section{Introduction}
Let $N\ge 2$ and $k\ge 1$ be integers. We define the periodic Bernoulli function
\[
\widetilde B_k(x):=B_k(\{x\}),\qquad \{x\}\in[0,1),
\]
where $B_k(x)$ is the $k$-th Bernoulli polynomial with $B_1(x)=x-\frac{1}{2}, B_2(x)=x^2-x+\frac{1}{6}$ and $\{x\}$ denotes the fractional part. 
For $k=1$, define $\widetilde{B}_1(0)=0$ by convention.

Define
$$
B_{k, N}=\left\{\begin{array}{lc}
\left(\widetilde{B}_k(i j / N)\right)_{0 \leqslant i, j \leqslant \lfloor N/2 \rfloor} & (k \text { even}), \\
\left(\widetilde{B}_k(i j / N)\right)_{1 \leqslant i, j \leqslant \lfloor (N-1)/2 \rfloor} & (k \text { odd}) .
\end{array}\right.
$$
For $k=2$, Brunault \cite{B26} proved that $\det(B_{2,N})$ is related to the cardinality of the cuspidal subgroup of the modular curve $X_1(N)$. 
For $k=3$, the matrix $B_{3,N}$ appears in the work on $K_2$ of families of elliptic curves with $N$-torsion points \cite{BDLR24}.

Let $m, n$ be positive integers, $N = 2n+1$ and
$$T_{m,N} = \left(\tan^m\left(\frac{\pi i j}{N}\right)\right)_{1\le i,j \le n}.$$
For $m=1, 2$, Sun Zhi-Wei \cite[Conjecture 5.2]{S24} made some conjectures on the integrality of $\det(T_{m,N})$. If $m=1$ and $N$ is a prime, the conjecture was solved by Guo in \cite{G22}.

These matrices can be studied in the framework of Frobenius paratrophic matrices. We can derive formulas for their determinants by specializing \cite[Theorem 6.14]{S22}.
However, the resulting formulas are quite complicated in practice.

In this note, we show that these determinants can be reduced to products of group determinants indexed by $d \mid N$ using discrete Fourier/cosine/sine transforms. This method has two advantages. First, it has a clear DFT interpretation. Second, in our setting, the entries of the resulting group determinants are well studied. We obtain formulas of a different, simpler shape than those derived from \cite[Theorem 6.14]{S22}.

We remark that Carlitz and Olson \cite{CO55} already studied the discrete sine transform of the matrix 
$
\left(\cot\left(\pi i j/p \right)\right)_{1\le i,j \le \frac{p-1}{2}}
$
where $p$ is an odd prime. They gave the relation between its determinant and Maillet's determinant. 
Our method extends their idea to a broader class of determinants.

The paper is organized as follows. 
In Section 2, we develop a Fourier-analytic framework for paratrophic determinants attached to $\Z/N\Z$ and show that the transformed matrices decompose into blocks given by group determinants. In Section 3, we apply this framework to Bernoulli and tangent matrices, obtaining explicit determinant formulas in Theorem \ref{thm:det_B_formulas} and Theorem \ref{thm:det_T_formulas}, and a corrected version of Sun's conjecture in Corollary \ref{cor:Sun}. These formulas were also numerically verified with PARI/GP scripts available at \cite{L26}.

\section{Paratrophic determinants attached to \(\mathbb Z/N\mathbb Z\)}
We use Steinberg's notion of Frobenius paratrophic matrix and determinant \cite{S22}, but
for the present purposes it is more convenient to work directly with the three
concrete matrix families attached to \(\mathbb Z/N\mathbb Z\). Throughout,
\[
\mathrm{i}:=\sqrt{-1},\qquad
N^+:=\Bigl\lfloor\frac N2\Bigr\rfloor,\qquad
N^-:=\Bigl\lfloor\frac{N-1}{2}\Bigr\rfloor.
\]
For \(a\in \mathbb Z/N\mathbb Z\), we write \(\bar a\in\{0,1,\dots,N-1\}\) for
its standard representative modulo \(N\), and
\[
[a]:=\{\pm a\}\in (\mathbb Z/N\mathbb Z)/\{\pm1\}
\]
for its \(\{\pm1\}\)-orbit.

\subsection{The matrices \(X_N\), \(Y_N\), and \(Z_N\)}

Let \(\{x_a:a\in \mathbb Z/N\mathbb Z\}\) be commuting indeterminates, and define
\[
X_N:=\bigl(x_{\overline{ij}}\bigr)_{0\le i,j\le N-1}.
\]
This is the semigroup matrix of the multiplicative monoid \(\mathbb Z/N\mathbb Z\).

Next let \(\{y_{[a]}:[a]\in (\mathbb Z/N\mathbb Z)/\{\pm1\}\}\) be commuting
indeterminates. Identifying \((\mathbb Z/N\mathbb Z)/\{\pm1\}\) with the set of
representatives \(\{0,1,\dots,N^+\}\), we define
\[
Y_N:=\bigl(y_{[ij]}\bigr)_{0\le i,j\le N^+},\qquad Y_N':=\bigl(y_{[ij]}\bigr)_{1\le i,j\le N^+}.
\]
Here, $Y_N$ is the semigroup matrix of the quotient semigroup
\((\mathbb Z/N\mathbb Z)/\{\pm1\}\) and $Y_N'$ is a submatrix of $Y_N$. We study $Y_N'$ for its application to tangent matrices.

Finally, let \(\{z_r:1\le r\le N^-\}\) be commuting indeterminates. We extend
this family to all residue classes by odd reduction: for \(a\in \mathbb Z/N\mathbb Z\),
set
\[
z_{\langle a\rangle}=
\begin{cases}
0,& a=0 \text{ or } a=N/2 \text{ for $N$ even} ,\\
z_a,& 1\le a\le N^-,\\
-z_{N-a},& N^-<a\le N-1.
\end{cases}
\]
We then define
\[
Z_N:=\bigl(z_{\langle ij\rangle}\bigr)_{1\le i,j\le N^-}.
\]
This is the paratrophic matrix arising from the odd quotient
\[
\mathbb C[\mathbb Z/N\mathbb Z]\Big/\big\langle [a]+[-a]:a\in \mathbb Z/N\mathbb Z\big\rangle.
\]

\subsection{Fourier, cosine and sine transforms}
Let $\omega_N:=e^{2\pi \mathrm{i}/N}$.
We consider the discrete Fourier, cosine, and sine
transform matrices $F_N$, $C_N$, and $S_N$ as follows
\[
F_N:=\bigl(\omega_N^{mn}\bigr)_{0\le m,n\le N-1},
\]
\[
C_N:=\bigl(c_n\cos(2\pi mn/N)\bigr)_{0\le m,n\le N^+},
\qquad
c_n=
\begin{cases}
\frac12,& n=0,\\
\frac12,& N \text{ even and } n=N/2,\\
1,& \text{otherwise},
\end{cases}
\]
and
\[
S_N:=\bigl(\sin(2\pi mn/N)\bigr)_{1\le m,n\le N^-}.
\]

We write
\[
\widehat X_N:=F_NX_N,\qquad
\widehat Y_N:=C_NY_N,\qquad
\widehat Z_N:=S_NZ_N.
\]

For each divisor \(d\mid N\), put
$N_d:=N/d$.
For \(t\in \mathbb Z/N_d\mathbb Z\), define
\begin{align*}
\widehat x_d(t)
&:=\sum_{r=0}^{N_d-1}x_{\overline{dr}}\,\omega_{N_d}^{tr},\\
\widehat y_d(t)
&:=\frac12\,y_{[0]}
+\frac12\,\mathbf 1_{2\mid N_d}\,y_{[N/2]}(-1)^t
+\sum_{r=1}^{N_d^-}y_{[dr]}\cos\!\Bigl(\frac{2\pi tr}{N_d}\Bigr)\\
&=\frac{1}{2}\sum_{r=1}^{N_d}y_{[dr]}\cos\!\Bigl(\frac{2\pi tr}{N_d}\Bigr),\\
\widehat z_d(t)
&:=\sum_{r=1}^{N_d^-}z_{\langle dr \rangle}\sin\!\Bigl(\frac{2\pi tr}{N_d}\Bigr)=\frac{1}{2}\sum_{r=1}^{N_d}z_{\langle dr \rangle}\sin\!\Bigl(\frac{2\pi tr}{N_d}\Bigr).
\end{align*}

The point of these definitions is that, for a fixed column index $k$ with
$d=(k,N)$, the transformed column is supported only on rows $m$ with $d\mid m$,
and its nonzero entries are given by Fourier-type sums on the smaller modulus $N_d$.

\begin{lemma}\label{lemma:structure-hats}
Let $m$ and $k$ be row and column indices of matrices and put \(d:=\gcd(k,N)\).
If $d\nmid m$, then 
$$(\widehat X_N)_{m,k}=0, \quad (\widehat Y_N)_{m,k}=0, \quad (\widehat Z_N)_{m,k}=0.$$
If $d \mid m$, denote $m=du$, $k=dv$, then $\gcd(v,N_d)=1$ and 
\[
(\widehat X_N)_{m,k}=d\,\widehat x_d(uv^{-1}), \quad (\widehat Y_N)_{m,k}=d\,\widehat y_d(uv^{-1}), \quad (\widehat Z_N)_{m,k}=d\,\widehat z_d(uv^{-1}),
\]
where $v^{-1}$ is the inverse of $v$ modulo $N_d$.
\end{lemma}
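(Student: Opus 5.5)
We compute each entry directly by reindexing the defining sum over $j\mapsto jk$; the three cases are treated in parallel. For $\widehat X_N$ we have
\[
(\widehat X_N)_{m,k}=\sum_{j=0}^{N-1}\omega_N^{mj}\,x_{\overline{jk}}.
\]
Write $k=dv$ with $\gcd(v,N_d)=1$; this coprimality holds because $d=\gcd(k,N)$. The map $j\mapsto jk \bmod N$ sends $\mathbb Z/N\mathbb Z$ onto $d\mathbb Z/N\mathbb Z$, and each fibre is a coset of the kernel $N_d\mathbb Z/N\mathbb Z$, which has $d$ elements.

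Group the sum by the value $jk\equiv dr$, with $r\in\mathbb Z/N_d\mathbb Z$. The $j$ with $jv\equiv r \pmod{N_d}$ are $j=j_0+N_d s$ for $s=0,\dots,d-1$, where $j_0\equiv rv^{-1}\pmod{N_d}$. The inner sum is then
\[
\sum_{s=0}^{d-1}\omega_N^{m(j_0+N_ds)}=\omega_N^{mj_0}\sum_{s=0}^{d-1}\omega_d^{ms}.
\]
This vanishes unless $d\mid m$, and equals $d\,\omega_N^{mj_0}$ when $d\mid m$. In the latter case write $m=du$; then $\omega_N^{mj_0}=\omega_{N_d}^{uj_0}=\omega_{N_d}^{urv^{-1}}$, which is well defined modulo $N_d$. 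Summing over $r$ gives $d\,\widehat x_d(uv^{-1})$.

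For $\widehat Y_N$ and $\widehat Z_N$ the indices only run over half ranges, so the first step is to unfold the half-range sums into full sums over $\mathbb Z/N\mathbb Z$. For $\widehat Y_N$, the weights $c_j$ combined with $y_{[jk]}=y_{[-jk]}$ and the evenness of cosine give
\[
\sum_{j=0}^{N^+}c_j\cos(2\pi mj/N)\,y_{[jk]}=\frac12\sum_{j\in\mathbb Z/N\mathbb Z}\cos(2\pi mj/N)\,y_{[jk]}.
\]
The factor $\tfrac12$ at $j=0$ and at $j=N/2$ accounts exactly for those terms being their own negatives. Similarly, for $\widehat Z_N$, oddness of $z_{\langle\cdot\rangle}$ and of sine, together with the vanishing of both at $j=0$ and $j=N/2$, gives
\[
\sum_{j=1}^{N^-}\sin(2\pi mj/N)\,z_{\langle jk\rangle}=\frac12\sum_{j\in\mathbb Z/N\mathbb Z}\sin(2\pi mj/N)\,z_{\langle jk\rangle}.
\]
The key point is that $z_{\langle a\rangle}$ is odd on all of $\mathbb Z/N\mathbb Z$, that is, $z_{\langle -a\rangle}=-z_{\langle a\rangle}$; this is immediate from the definition.

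Once unfolded, take real or imaginary parts, or simply repeat the fibre computation with $\omega$ replaced by $\cos$ or $\sin$. The same grouping argument yields the vanishing for $d\nmid m$. For $d\mid m$ it gives
\[
\frac d2\sum_{r\in\mathbb Z/N_d\mathbb Z} y_{[dr]}\cos\!\Bigl(\frac{2\pi u v^{-1} r}{N_d}\Bigr),
\]
and the analogous expression with $\sin$ and $z_{\langle dr\rangle}$. These are exactly the second expressions given for $d\,\widehat y_d$ and $d\,\widehat z_d$. Since $y$, $z$, $\cos$ and $\sin$ are real and the indeterminates appear linearly, one may equivalently apply the $X$-computation with $x_a=y_{[a]}$ or $x_a=z_{\langle a\rangle}$ and take real or imaginary parts.

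It remains to check that the two displayed formulas for $\widehat y_d(t)$ and $\widehat z_d(t)$ in the definition agree. This is the same folding identity applied on the modulus $N_d$: the term $r=N_d$ contributes $y_{[0]}$, and when $N_d$ is even the term $r=N_d/2$ contributes $y_{[N/2]}(-1)^t$. The main care point is exactly this bookkeeping of the self-paired terms $j=0$ and $j=N/2$, namely the weights $c_n$ and the zero convention for $z$.
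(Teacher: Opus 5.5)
Your proposal is correct and follows essentially the same route as the paper. You split $j$ into residue classes modulo $N_d$ to get a geometric sum over $\mathbb{Z}/d\mathbb{Z}$ that vanishes unless $d\mid m$, change variables via $v^{-1}$, and unfold/fold the half-range cosine and sine sums using the parity of $y_{[\cdot]}$ and $z_{\langle\cdot\rangle}$. The only difference is cosmetic: you work with $\cos$ and $\sin$ (or real and imaginary parts) directly, whereas the paper rewrites the half-range sums as $\frac12\sum_j\omega_N^{mj}y_{[jk]}$ and $\frac{1}{2\mathrm{i}}\sum_j\omega_N^{mj}z_{\langle jk\rangle}$ before folding $r$ with $N_d-r$.
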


\begin{proof}
We begin with \(X_N\). By definition,
\begin{equation*}
(\widehat X_N)_{m,k}
=\sum_{j=0}^{N-1}\omega_N^{mj}\,x_{\overline{jk}}.
\end{equation*}
Since \(\overline{jk}\) depends only on \(j\bmod N_d\), we may
write \(j=j_0+tN_d\) with \(0\le j_0<N_d\) and \(0\le t<d\). This gives
\[
(\widehat X_N)_{m,k}
=
\sum_{j_0=0}^{N_d-1}x_{\overline{j_0k}}\omega_N^{mj_0}
\sum_{t=0}^{d-1}e^{2\pi \mathrm{i}mt/d}.
\]
The inner geometric sum is \(0\) unless \(d\mid m\), and equals \(d\) when
\(d\mid m\). This proves the vanishing statement. If now \(m=du\) and \(k=dv\)
with \(v\in(\mathbb Z/N_d\mathbb Z)^\times\), then
\[
(\widehat X_N)_{du,dv}
=
d\sum_{j_0=0}^{N_d-1}x_{\overline{dj_0v}}\omega_{N_d}^{uj_0}.
\]
Since $v$ is invertible modulo $N_d$, the change of variables
\(r\equiv j_0v\pmod N_d\) yields
\[
(\widehat X_N)_{du,dv}
=
d\sum_{r=0}^{N_d-1}x_{\overline{dr}}\omega_{N_d}^{uv^{-1}r}
=
d\,\widehat x_d(uv^{-1}).
\]

For \(Y_N\), pairing $j$ and $N-j$, and using the symmetry \(y_{[a]}=y_{[-a]}\), we have
\[
(\widehat Y_N)_{m,k}
=
\sum_{j=0}^{N^+}c_j\cos\!\Bigl(\frac{2\pi mj}{N}\Bigr)y_{[jk]}
=
\frac12\sum_{j=0}^{N-1}y_{[jk]}\omega_N^{mj}.
\]
The same decomposition \(j=j_0+tN_d\) now gives the same geometric series, and
hence the same vanishing criterion:
\[
(\widehat Y_N)_{m,k}=0\qquad\text{if }d\nmid m.
\]
If \(m=du\) and \(k=dv\), then
\[
(\widehat Y_N)_{du,dv}
=
\frac d2\sum_{r=0}^{N_d-1}y_{[dr]}\omega_{N_d}^{uv^{-1}r}.
\]
Pairing \(r\) with \(N_d-r\), and using \(y_{[d(N_d-r)]}=y_{[dr]}\), we obtain
\[
(\widehat Y_N)_{du,dv}
=
d\left(
\frac12\,y_{[0]}
+\frac12\,\mathbf 1_{2\mid N_d}\,y_{[N/2]}(-1)^{uv^{-1}}
+\sum_{r=1}^{N_d^-}y_{[dr]}\cos\!\Bigl(\frac{2\pi uv^{-1}r}{N_d}\Bigr)
\right),
\]
which is precisely \(d\,\widehat y_d(uv^{-1})\).

The sine case is handled in the same way, now using the oddness of
\(a\mapsto z_{\langle a\rangle}\). Indeed, we have
\[
(\widehat Z_N)_{m,k}
=
\sum_{j=1}^{N^-}\sin\!\Bigl(\frac{2\pi mj}{N}\Bigr)z_{\langle jk\rangle}
=
\frac{1}{2\mathrm{i}}\sum_{j=0}^{N-1}\omega_N^{mj}z_{\langle jk\rangle},
\]
because \(z_{\langle -a\rangle}=-z_{\langle a\rangle}\), and the terms at \(0\)
and, when \(N\) is even, at \(N/2\), vanish. Repeating the above argument shows
that
\[
(\widehat Z_N)_{m,k}=0\qquad\text{if }d\nmid m.
\]
If \(m=du\) and \(k=dv\), then
\[
(\widehat Z_N)_{du,dv}
=
\frac d{2\mathrm{i}}
\sum_{r=0}^{N_d-1}z_{\langle dr\rangle}\omega_{N_d}^{uv^{-1}r}.
\]
Pairing \(r\) with \(N_d-r\), and using
\(z_{\langle d(N_d-r)\rangle}=-z_{\langle dr\rangle}\), we find
\[
(\widehat Z_N)_{du,dv}
=
d\sum_{r=1}^{N_d^-}z_{\langle dr \rangle}\sin\!\Bigl(\frac{2\pi uv^{-1}r}{N_d}\Bigr)
=
d\,\widehat z_d(uv^{-1}).
\]
\end{proof}

\begin{remark}
Lemma~\ref{lemma:structure-hats} may be read as a support statement for the
DFT. For a fixed column index \(k\), the corresponding
column of \(X_N\) factors through the quotient
\(\mathbb Z/N\mathbb Z\twoheadrightarrow \mathbb Z/(N/\gcd(k,N))\mathbb Z\), so
its transform is supported only on frequencies divisible by \(\gcd(k,N)\). The
formulas for \(Y_N\) and \(Z_N\) are the even and odd counterparts of the same
observation.
\end{remark}

\subsection{Block structure of the transformed matrices}
The preceding lemma immediately imposes a block structure on the transformed
matrices after suitable permutation. The diagonal blocks are indexed by divisors $d$ of $N$, and on each such
block one recovers a group determinant.

\begin{lemma}\label{lemma:block-structure}
For each of \(\widehat X_N\), \(\widehat Y_N\), and \(\widehat Z_N\), group rows
and columns according to the value of \(\gcd(\,\cdot\,,N)\), and order the
divisors of \(N\) by reverse divisibility. Then the resulting matrix is block
upper triangular.

Fix $d\mid N$, write $G_{N_d}:=(\mathbb Z/N_d\mathbb Z)^\times$, and choose a set
$T_{N_d}\subset G_{N_d}$ of representatives for $G_{N_d}/\{\pm1\}$.
Define
\begin{align*}
G_d^{(X)}
&=\bigl(d\,\widehat x_d(uv^{-1})\bigr)_{u,v\in G_{N_d}},\\
G_d^{(Y)}
&=\bigl(d\,\widehat y_d(uv^{-1})\bigr)_{u,v\in T_{N_d}},\\
G_d^{(Z)}
&=\bigl(d\,\widehat z_d(uv^{-1})\bigr)_{u,v\in T_{N_d}}.
\end{align*}
Then we have
\[
\det(\widehat X_N)=\prod_{d\mid N}\det G_d^{(X)},\qquad
\det(\widehat Y_N)=\prod_{d\mid N}\det G_d^{(Y)},\qquad
\det(\widehat Z_N)=\prod_{\substack{d\mid N\\ N_d>2}}\det G_d^{(Z)}.
\]
\end{lemma}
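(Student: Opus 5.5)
The plan is to derive everything from Lemma~\ref{lemma:structure-hats}, treating the three cases in parallel. First we fix the index sets. For $\widehat X_N$, rows and columns are indexed by $\{0,\dots,N-1\}$. For $\widehat Y_N$ they are indexed by $\{0,\dots,N^+\}$, which we identify with $(\mathbb Z/N\mathbb Z)/\{\pm1\}$. For $\widehat Z_N$ they are indexed by $\{1,\dots,N^-\}$, which we identify with the orbits $[a]$ for $a\neq 0$ and $a\neq N/2$ (that is, $2a\not\equiv 0$).

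We then partition each index set by $d=\gcd(\cdot,N)$. The elements $k$ with $\gcd(k,N)=d$ are exactly $k=dv$ with $v\in G_{N_d}$, so in the $X$ case the $d$-class is in bijection with $G_{N_d}$. After passing to $\{\pm1\}$-orbits, the $d$-class is in bijection with $G_{N_d}/\{\pm1\}$. We choose the representatives of $\{0,\dots,N^+\}$ (respectively $\{1,\dots,N^-\}$) as our $T_{N_d}$. The final formula does not depend on this choice up to sign bookkeeping, which we will address below.

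In the $Z$ case, the classes that are excluded are $d=N$ (the element $0$) and, when $N$ is even, $d=N/2$ (the element $N/2$). These are exactly the divisors with $N_d\le 2$, which is where the restriction $N_d>2$ in the product comes from. Note also that when $N_d\le 2$ we have $G_{N_d}/\{\pm1\}$ consisting of a single point, and in the $Y$ case that single point corresponds to the row and column $0$ or $N/2$.

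For block triangularity, we order the divisors so that whenever $d'\mid d$ with $d'\ne d$, the class of $d$ comes before the class of $d'$. Take a row of class $d_r$ and a column of class $d_c$. By Lemma~\ref{lemma:structure-hats}, the entry vanishes unless $d_c\mid m$, which is equivalent to $d_c\mid d_r$. Nonzero entries therefore occur only when the row's divisor is a multiple of the column's divisor, which means the row block comes no later than the column block. This is block upper triangularity.

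The determinant is then the product of the determinants of the diagonal blocks, up to the sign of the simultaneous row and column permutation. Because we apply the same permutation to rows and columns, that sign cancels. On a diagonal block, $m=du$ and $k=dv$ with $u,v\in G_{N_d}$ (or in $T_{N_d}$), and the lemma gives the entry $d\,\widehat x_d(uv^{-1})$, and similarly for $\widehat y_d$ and $\widehat z_d$. This is exactly $G_d^{(\cdot)}$.

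The main obstacle is checking that the $Y$ and $Z$ entries are well defined on orbits, since an element $u\in T_{N_d}$ may correspond to an index $m$ whose actual value is $-du \bmod N$. We will verify this using the parities $\widehat y_d(-t)=\widehat y_d(t)$ and $\widehat z_d(-t)=-\widehat z_d(t)$. In the $Y$ case, replacing $u$ or $v$ by its negative leaves the entry unchanged. In the $Z$ case, negating changes the sign of an entire row or column. If we negate row $u$ and column $u$ together, the two sign changes cancel in the determinant. So if we choose $T_{N_d}$ to be the set of actual indices divided by $d$, reduced modulo $N_d$, the diagonal block is literally $G_d^{(Z)}$. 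Changing the representative of $u$ flips one row and the corresponding column simultaneously, so $\det G_d^{(Z)}$ is independent of the choice of $T_{N_d}$, and the formula holds as stated.
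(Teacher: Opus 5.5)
Your proposal is correct and follows essentially the same route as the paper: block upper triangularity comes from Lemma~\ref{lemma:structure-hats} via $\gcd(k,N)\mid m \Rightarrow \gcd(k,N)\mid\gcd(m,N)$, and the diagonal blocks are identified with $G_d^{(X)}$, $G_d^{(Y)}$, $G_d^{(Z)}$, with the parities of $\widehat y_d$ and $\widehat z_d$ handling the choice of $T_{N_d}$. Your explicit check in the $Z$ case, that changing a representative negates one row and the matching column so the determinant is unchanged, is just a more detailed version of the paper's remark about simultaneous sign changes.
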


\begin{proof}
If an entry in row \(m\) and column \(k\) is nonzero, then
Lemma~\ref{lemma:structure-hats} shows that \(\gcd(k,N)\mid m\), hence
\(\gcd(k,N)\mid \gcd(m,N)\). 
This means any nonzero entry lies in a block with row block index $\le$ column block index in the chosen order, 
which is precisely the definition of block upper triangularity.

It follows that the determinant of each matrix is the product of the
determinants of its diagonal blocks. We now describe the contribution of the
diagonal block corresponding to a fixed divisor $d$.

For $\widehat X_N$, the row and column indices in the diagonal block
corresponding to $d$ are of the form $du$ and $dv$ with $u, v \in G_{N_d}$. By
Lemma~\ref{lemma:structure-hats}, the corresponding entries are
$d\,\widehat x_d(uv^{-1})$. Hence the determinant of this diagonal block is
$\det G_d^{(X)}$.

For $\widehat Y_N$, the entries of the diagonal block corresponding to $d$ are of the form
$d\,\widehat y_d(uv^{-1})$. Since $\widehat y_d(-t)=\widehat
y_d(t)$, after choosing representatives $T_{N_d}$ for $G_{N_d}/\{\pm1\}$, this
gives the matrix $G_d^{(Y)}$, up to simultaneous permutations of rows and
columns. Hence the determinant of the diagonal block corresponding to $d$ is
$\det G_d^{(Y)}$.

For $\widehat Z_N$, the block corresponding to \(d\) is absent when \(N_d=1\) or \(2\), assume therefore that $N_d>2$. 
By Lemma~\ref{lemma:structure-hats}, the entries of the diagonal block corresponding to $d$ are of the form
$d\,\widehat z_d(uv^{-1})$. After choosing representatives $T_{N_d}$ for
$G_{N_d}/\{\pm1\}$, this gives the matrix $G_d^{(Z)}$, up to simultaneous
permutations of rows and columns and simultaneous sign changes of rows and columns coming
from $\widehat z_d(-t)=-\widehat z_d(t)$. Hence the determinant of the diagonal
block corresponding to $d$ is $\det G_d^{(Z)}$. If $N_d=1$ or $2$, the
corresponding block is absent.

Multiplying these contributions gives the stated formulas.
\end{proof}

We now derive a formula for $\det(Y_N'|_{y_0=0})$.
\begin{lemma}\label{lemma:Y-truncation}
Let the notation be as in Lemma \ref{lemma:block-structure}. Then we have
\[
\det(C_N)\det\!\bigl(Y_N'|_{y_0=0}\bigr)
=
\frac{N}{2}\prod_{\substack{d\mid N\\ d\neq N}} \det(G_d^{(Y)}|_{y_0=0}).
\]
\end{lemma}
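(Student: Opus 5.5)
The plan is to obtain the truncated determinant as a derivative of the full one with respect to $y_{[0]}$, and then read off the same derivative from the block factorization of Lemma~\ref{lemma:block-structure}.

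\textbf{Step 1: relate $\det Y_N$ to $Y_N'$.} Regard $\det Y_N$ as a polynomial in $y_0:=y_{[0]}$, keeping the other variables fixed. In $Y_N=(y_{[ij]})_{0\le i,j\le N^+}$, row $0$ and column $0$ consist entirely of $y_0$. Subtract row $0$ from each row $i\ge 1$ and expand along column $0$. This gives
\[
\det Y_N = y_0\,\det\bigl(Y_N' - y_0 J\bigr),
\]
where $J$ is the $N^+\times N^+$ all-ones matrix. The matrix $Y_N'$ may itself contain $y_0$, namely at entries with $N\mid ij$. This does not matter, because setting $y_0=0$ in $Y_N'-y_0J$ gives exactly $Y_N'|_{y_0=0}$. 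Hence
\[
\frac{\partial}{\partial y_0}\det Y_N\Big|_{y_0=0}=\det\bigl(Y_N'|_{y_0=0}\bigr).
\]

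\textbf{Step 2: take the same derivative of the block factorization.} Multiply Step 1 by the constant $\det(C_N)$. By Lemma~\ref{lemma:block-structure},
\[
\det(C_N)\det(Y_N)=\det(\widehat Y_N)=\prod_{d\mid N}\det G_d^{(Y)}.
\]
Now isolate the block $d=N$. Here $N_d=1$, so $G_{N_d}$ is trivial and $G_N^{(Y)}$ is $1\times1$. Its entry is $N\,\widehat y_N(0)$. From the definition, $N_d^-=0$ and $\mathbf 1_{2\mid N_d}=0$, so the entry is $\tfrac N2 y_0$. Therefore
\[
\det(C_N)\det(Y_N)=\frac N2\,y_0\prod_{\substack{d\mid N\\ d\ne N}}\det G_d^{(Y)}.
\]
Differentiate in $y_0$ and set $y_0=0$. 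Only the term in which the explicit factor $y_0$ is differentiated survives, which gives $\tfrac N2\prod_{d\ne N}\det(G_d^{(Y)}|_{y_0=0})$. Comparing with Step 1 yields the stated identity.

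\textbf{Main point to check.} The delicate step is the row-reduction identity in Step 1, specifically that the $y_0$'s hidden inside $Y_N'$ cause no trouble. The argument handles this by working with polynomial identities in $y_0$ and evaluating at $y_0=0$ only after differentiating, so no cancellation of $y_0$ against a possibly vanishing factor is ever needed. The remaining check, that the $d=N$ block is exactly $\tfrac N2y_0$, is immediate from the definition of $\widehat y_d$.
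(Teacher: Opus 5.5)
Your proof is correct and follows the paper's argument: the same row reduction giving $\det Y_N = y_0\det(Y_N'-y_0J)$, the same isolation of the $1\times 1$ block $\tfrac N2 y_0$ at $d=N$, and then extracting the coefficient of $y_0$ on both sides. Taking $\partial/\partial y_0$ at $y_0=0$ instead of the paper's ``divide by $y_0$, then set $y_0=0$'' is only a cosmetic difference, since both operations agree on polynomials divisible by $y_0$; the paper's division is equally rigorous, because it is exact division in the polynomial ring.
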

\begin{proof}
Write
\[
Y_N=
\begin{pmatrix}
y_0 & y_0\mathbf 1^{T}\\
y_0\mathbf 1 & Y_N'
\end{pmatrix},
\]
where \(\mathbf 1\) is the column vector of length \(N^+\) all of whose entries are \(1\).

Subtracting the first row from each of the remaining rows, we get
\[
\det(Y_N)
=
\det\begin{pmatrix}
y_0 & y_0\mathbf 1^{T}\\
0 & Y_N'-y_0J
\end{pmatrix}
=
y_0\det(Y_N'-y_0J),
\]
where \(J\) is the all-ones matrix of size \(N^+\times N^+\).
Hence
\[
\left.\frac{\det(Y_N)}{y_0}\right|_{y_0=0}
=
\det\!\bigl(Y_N'|_{y_0=0}\bigr).
\]

By Lemma~\ref{lemma:block-structure}, we have 
\[
\det(C_N)\det(Y_N)=\prod_{d\mid N}\det G_d^{(Y)}.
\]

The block corresponding to \(d=N\) is \(1\times 1\) with the entry $N\widehat y_N(0)=\frac N2\,y_0$.
Thus
\[
\prod_{d\mid N}\det G_d^{(Y)}
=
\frac N2\,y_0
\prod_{\substack{d\mid N\\ d\neq N}}\det G_d^{(Y)}.
\]

Dividing by \(y_0\) and then setting \(y_0=0\), we obtain
\[
\det(C_N)\det\!\bigl(Y_N'|_{y_0=0}\bigr)
=
\frac{N}{2}\prod_{\substack{d\mid N\\ d\neq N}}
\det\!\bigl(G_d^{(Y)}|_{y_0=0}\bigr).
\]
\end{proof}

We next record the determinants of the three transform matrices.
\begin{lemma}
We have
\begin{align}
\det(F_N)
&=
\mathrm{i}^{\frac{(N-1)(3N-2)}{2}}\,N^{\frac{N}{2}}, \label{eq:detF} \\
\det(C_N)
&=
(-1)^{\frac{N^+(N^++1)}{2}}\,\frac{N^{\frac{N^++1}{2}}}{2^{N^++1}}, \label{eq:detC} \\
\det(S_N)
&=
(-1)^{\frac{N^-(N^--1)}{2}}\,\frac{N^{\frac{N^-}{2}}}{2^{N^-}} \label{eq:detS}.
\end{align}
\end{lemma}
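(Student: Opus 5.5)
The plan is to get each absolute value from an orthogonality relation, and each phase from a product formula for a Vandermonde determinant; the sign bookkeeping in the last step is the only delicate part.

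\emph{The DFT.} Since $F_N\overline{F_N}^{T}=NI$, we have $|\det F_N|=N^{N/2}$. To get the phase, note that $F_N$ is the Vandermonde matrix in the nodes $\omega_N^m$, so
\[
\det F_N=\prod_{0\le m<n\le N-1}(\omega_N^n-\omega_N^m).
\]
Write $\omega_N^n-\omega_N^m=2\mathrm{i}\,e^{\pi\mathrm{i}(m+n)/N}\sin(\pi(n-m)/N)$. Every sine factor is positive, so the argument of $\det F_N$ is
\[
\binom N2\frac{\pi}{2}+\frac{\pi}{N}\sum_{m<n}(m+n).
\]
Since $\sum_{m<n}(m+n)=(N-1)\binom N2$, the phase is $\mathrm{i}^{\binom N2}\cdot \mathrm{i}^{(N-1)^2}$. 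The exponent $\binom N2+(N-1)^2=\frac{(N-1)(3N-2)}{2}$ gives \eqref{eq:detF}. This is the classical Gauss/Schur computation.

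\emph{The cosine transform.} I will pull the weights $c_n$ out column by column. This contributes a factor $\tfrac12$ for $n=0$, and a further $\tfrac12$ when $N$ is even, for $n=N/2$. The remaining matrix $(\cos(2\pi mn/N))_{0\le m,n\le N^+}$ is a Vandermonde-type determinant in the Chebyshev sense: $\cos(m\theta_n)=T_m(\cos\theta_n)$, and $T_m$ has leading coefficient $2^{m-1}$ for $m\ge1$. Hence the determinant equals
\[
\prod_{m=1}^{N^+}2^{m-1}\cdot\prod_{0\le n<n'\le N^+}\bigl(\cos\theta_{n'}-\cos\theta_n\bigr),\qquad \theta_n=2\pi n/N.
\]
Each difference is negative, and there are $\binom{N^++1}{2}$ of them. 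This yields the sign $(-1)^{N^+(N^++1)/2}$.

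For the absolute value there are two options:
\begin{itemize}
\item Evaluate the product of differences directly via $\cos a-\cos b=-2\sin\frac{a+b}2\sin\frac{a-b}2$ and the standard product $\prod_{k=1}^{N-1}\sin(\pi k/N)=N/2^{N-1}$.
\item Cleaner: use the orthogonality relation. The matrix $A=(\cos(2\pi mn/N))$ satisfies $A\,\mathrm{diag}(c_n)\,A^{T}=\tfrac N2\,\mathrm{diag}(1/c_m)$. This follows by pairing $j$ with $N-j$ in $\sum_{j=0}^{N-1}\omega_N^{(m\pm m')j}$. Hence $(\det A)^2\prod_n c_n=(N/2)^{N^++1}\prod_m c_m^{-1}$.
\end{itemize}
With the second option, $|\det C_N|=\prod c_n\,|\det A|=(N/2)^{(N^++1)/2}\prod_n c_n^{1/2}$. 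Checking the count of the powers of $2$ in the two parity cases of $N$ against the formula $N^{(N^++1)/2}/2^{N^++1}$ is the main bookkeeping obstacle. I would do it separately for $N$ odd, where only $c_0=\tfrac12$, and $N$ even, where $c_0=c_{N/2}=\tfrac12$. If the parity cases disagree with the stated uniform formula, I would recheck the orthogonality constants rather than the method.

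\emph{The sine transform.} The same approach applies. Orthogonality gives $S_NS_N^{T}=\frac N4 I$, by the same pairing argument with no boundary terms, since $\sin$ vanishes at $n=0,N/2$. Therefore $|\det S_N|=(N/4)^{N^-/2}=N^{N^-/2}/2^{N^-}$. For the sign, write $\sin(m\theta)=\sin\theta\,U_{m-1}(\cos\theta)$. Here $\sin\theta_n>0$ for $1\le n\le N^-$, and $U_{m-1}$ has leading coefficient $2^{m-1}$. So $\det S_N$ is a positive factor times $\prod_{n<n'}(\cos\theta_{n'}-\cos\theta_n)$, which has $\binom{N^-}{2}$ negative factors. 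This gives the sign $(-1)^{N^-(N^--1)/2}$ and completes \eqref{eq:detS}.
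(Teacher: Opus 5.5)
The overall strategy is the paper's. You get the sign from a Vandermonde-type product, derived via the Chebyshev polynomials $T_m$ and $U_{m-1}$ rather than by citing Krattenthaler's lemma; this is a fine self-contained substitute. You get the modulus from an orthogonality relation. Your arguments for $F_N$, for $S_N$, and for the sign of $\det C_N$ are correct as written.

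The modulus of $\det C_N$, however, is not established, and the step you wrote down is wrong.

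\textbf{The constant.} In $A\,\mathrm{diag}(c_n)\,A^{T}=\frac N2\,\mathrm{diag}(1/c_m)$ the constant should be $\frac N4$, not $\frac N2$. Pairing $j\leftrightarrow N-j$ contributes one factor $\frac12$, and $\cos a\cos b=\frac12[\cos(a+b)+\cos(a-b)]$ contributes another. As a quick check, the $(0,0)$ entry is $\sum_n c_n=N/2$ for both parities of $N$. Your relation would give $\frac N2\cdot c_0^{-1}=N$ there.

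\textbf{The algebra.} Even from your relation, the next line does not follow. The identity $(\det A)^2\prod c_n=(N/2)^{N^++1}\prod c_m^{-1}$ gives $|\det A|=(N/2)^{(N^++1)/2}\prod c_n^{-1}$. So the weights cancel entirely against the $\prod c_n$ pulled out of $C_N$, and no factor $\prod c_n^{1/2}$ survives. Your final expression $(N/2)^{(N^++1)/2}\prod c_n^{1/2}$ disagrees with the target for every $N>2$. The parity check you deferred would therefore have failed.

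\textbf{The fix.} With the correct constant, $A$ is symmetric and $C_N=A\,\mathrm{diag}(c_n)$, so the relation is equivalent to $C_N^2=\frac N4 I_{N^++1}$. This is exactly the identity the paper uses. It gives
\[
|\det C_N|=(N/4)^{(N^++1)/2}=N^{(N^++1)/2}/2^{N^++1}
\]
uniformly in $N$. The split by parity of $N$ that you flagged as the main obstacle disappears. Combined with your sign $(-1)^{N^+(N^++1)/2}$, this completes the cosine formula.
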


\begin{proof}
The formula for $\det(F_N)$ is the classical Vandermonde evaluation of the
Fourier matrix. 

For $\det(S_N)$, we can take $X_i=w_N^i$ in \cite[Lemma~2, (2.3)]{K99}. After some simplifications, we get
\begin{equation*}
\det(S_N)
=2^{N^-(N^- -1)}
\prod_{1\le i<j\le N^-}
\sin\!\left(\frac{\pi(i-j)}{N}\right)
\sin\!\left(\frac{\pi(i+j)}{N}\right)
\prod_{i=1}^{N^-}\sin\!\left(\frac{2\pi i}{N}\right).
\end{equation*}
From this formula, we see the sign of $\det(S_N)$ is $(-1)^{\frac{N^-(N^--1)}{2}}$.

On the other hand, it is well known $S_N^2=\frac{N}{4}I_{N^-}$ which shows 
$\det(S_N)^2 = \left(N/4\right)^{N^-}$. Combining these two facts, we get the formula for $\det(S_N)$.

The formula for $\det(C_N)$ can be obtained similarly using \cite[Lemma~2, (2.5)]{K99} and $C_N^2=\frac{N}{4}I_{N^++1}$.
\end{proof}

We can now combine the block decomposition with the standard factorization of a
group determinant into linear factors indexed by characters.

\begin{theorem}\label{thm:det-XYZ-general}
For each divisor \(d\mid N\), let $\widehat G_{N_d}$ be the character group of $G_{N_d}$ and 
\[
\widehat G_{N_d}^{+}:=\{\chi\in\widehat G_{N_d}:\chi(-1)=1\},\qquad
\widehat G_{N_d}^{-}:=\{\chi\in\widehat G_{N_d}:\chi(-1)=-1\}.
\]
Then
\begin{align*}
\det(X_N)
&=
\mathrm{i}^{-\frac{(N-1)(3N-2)}{2}}\,N^{-\frac{N}{2}}
\prod_{d\mid N}\ \prod_{\chi\in\widehat G_{N_d}}
\left(
d\sum_{t\in G_{N_d}}\widehat x_d(t)\chi(t)
\right),\\
\det(Y_N)
&=
(-1)^{\frac{N^+(N^++1)}{2}}\,
\frac{2^{N^+ +1 - N^-}}{N^{\frac{N^++1}{2}}}
\prod_{d\mid N}\ \prod_{\chi\in\widehat G_{N_d}^{+}}
\left(
d\sum_{t\in G_{N_d}}\widehat y_d(t)\chi(t)
\right),\\
\det\left(Y_N'|_{y_0=0}\right)
&=
(-1)^{\frac{N^+(N^++1)}{2}}
\frac{2^{N^+-N^-}}{N^{\frac{N^+-1}{2}}}
\prod_{\substack{d\mid N\\ d\neq N}}
\ \prod_{\chi\in \widehat G_{N_d}^{+}}
\left(
d\sum_{t\in G_{N_d}}
\bigl(\widehat y_d(t)|_{y_0=0}\bigr)\chi(t)
\right) \\
\det(Z_N)
&=
(-1)^{\frac{N^-(N^--1)}{2}}\,
N^{-\frac{N^-}{2}}
\prod_{\substack{d\mid N\\ N_d>2}}\ \prod_{\chi\in\widehat G_{N_d}^{-}}
\left(
d\sum_{t\in G_{N_d}}\widehat z_d(t)\chi(t)
\right).
\end{align*}
\end{theorem}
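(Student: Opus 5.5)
The plan is to combine Lemma~\ref{lemma:block-structure} and Lemma~\ref{lemma:Y-truncation} with the determinant formulas \eqref{eq:detF}--\eqref{eq:detS}. It then remains to evaluate each block determinant $\det G_d^{(\ast)}$ by the classical factorization of a group determinant of a finite abelian group into character sums.

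For $X_N$, write $\det(X_N)=\det(\widehat X_N)/\det(F_N)$. Lemma~\ref{lemma:block-structure} gives $\det(\widehat X_N)=\prod_{d\mid N}\det G_d^{(X)}$. Each $G_d^{(X)}$ has entries $f(uv^{-1})$ with $f=d\,\widehat x_d$ restricted to $G_{N_d}$, so it is a group matrix of the abelian group $G_{N_d}$. The Dedekind--Frobenius factorization gives
\[
\det G_d^{(X)}=\prod_{\chi\in\widehat G_{N_d}}\sum_{t\in G_{N_d}}d\,\widehat x_d(t)\chi(t).
\]
The characters are eigenvalues of this convolution operator, with eigenvectors $(\chi(u))_u$. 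Inserting \eqref{eq:detF} gives the first formula.

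For $Y_N$, the function $f=d\,\widehat y_d$ is even on $G_{N_d}$. The matrix $(f(uv^{-1}))_{u,v\in T_{N_d}}$ is then a group matrix for the quotient group $G_{N_d}/\{\pm1\}$ with the function $\tilde f([t])=f(t)+f(-t)=2f(t)$ when $N_d>2$. Checking the indexing: the $(u,v)$ entry should be $f(uv^{-1})$, but the quotient group matrix entry is $\sum_{\epsilon=\pm1}$ over lifts, which does not match directly. More carefully, I would argue that the eigenvectors are $(\chi(u))_{u\in T}$ for even $\chi$. The full group matrix on $G_{N_d}$, restricted to even vectors, acts as $2\,(f(uv^{-1}))_{T}$. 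Hence $\det G_d^{(Y)}=\prod_{\chi\text{ even}}\tfrac12\sum_{t\in G_{N_d}}f(t)\chi(t)$.

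This gives a factor $2^{-|T_{N_d}|}$ for each $d$ with $N_d>2$. When $N_d\in\{1,2\}$, $G_{N_d}$ is trivial, $T_{N_d}$ has one element, and the character sum equals $f(1)$ exactly, with no factor $\tfrac12$. The total power of $2$ is therefore $2^{-\sum_{N_d>2}\varphi(N_d)/2}=2^{-N^-}$. This uses $\sum_{d\mid N,\,N_d>2}\varphi(N_d)=N-1-\mathbf 1_{2\mid N}=2N^-$.

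Combining with \eqref{eq:detC}, that is, dividing by $\det C_N$, yields $(-1)^{N^+(N^++1)/2}\,2^{N^++1}N^{-(N^++1)/2}\cdot 2^{-N^-}$, matching the stated constant.

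For $Y_N'|_{y_0=0}$, I use Lemma~\ref{lemma:Y-truncation} together with the same block evaluation for $d\neq N$. The removed $d=N$ block contributes no power of $2$, since $N_d=1$. The constant becomes $\tfrac N2\cdot 2^{N^++1}N^{-(N^++1)/2}2^{-N^-}=2^{N^+-N^-}N^{-(N^+-1)/2}$, with the same sign.

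For $Z_N$, $f=d\,\widehat z_d$ is odd. The same restriction argument applies on odd vectors $(\chi(u))_{u\in T}$ with $\chi$ odd: the full group matrix restricted to odd functions equals $2(f(uv^{-1}))_T$, once we identify an odd function with its values on $T$. The eigenvalues are therefore $\tfrac12\sum_t f(t)\chi(t)$ for $\chi\in\widehat G_{N_d}^-$. The total power of $2$ is $2^{-N^-}$, which cancels the $2^{N^-}$ in \eqref{eq:detS} and gives the stated formula.

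The main obstacle is the bookkeeping in the $Y$ and $Z$ cases. The key step is the identification of the restricted block with half the character sums, justified by the eigenvector computation $\sum_{v\in T}f(uv^{-1})\chi(v)=\tfrac12\sum_{v\in G}f(uv^{-1})\chi(v)$ for $\chi$ of the same parity as $f$. One must also handle the degenerate $N_d\le2$ blocks separately, and check the divisor-sum identity for $\varphi$ that produces $2^{-N^-}$.
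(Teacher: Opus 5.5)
Your proposal is correct and follows essentially the same route as the paper. You use the block decomposition of Lemma~\ref{lemma:block-structure} and Lemma~\ref{lemma:Y-truncation}, apply the Dedekind factorization to each block, and treat the even/odd blocks through the identity $\sum_{v\in T}=\tfrac12\sum_{v\in G}$ for $\chi$ of the same parity as $f$; the paper instead phrases the even case via the quotient group $G_{N_d}/\{\pm1\}$ and the odd case via the eigenvectors $(\chi(u)^{-1})_u$, and your explicit count $\sum_{N_d>2}\varphi(N_d)=2N^-$ supplies the power of $2$ that the paper leaves implicit. One small slip: with eigenvector $(\chi(u))_u$ the eigenvalue is $\sum_t f(t)\overline{\chi}(t)$, not $\sum_t f(t)\chi(t)$, but this is harmless because $\chi\mapsto\overline{\chi}$ permutes the characters of each parity.
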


\begin{proof}
By Lemma~\ref{lemma:block-structure},
\[
\det(\widehat X_N)=\prod_{d\mid N}\det G_d^{(X)},\qquad
\det(\widehat Y_N)=\prod_{d\mid N}\det G_d^{(Y)},\qquad
\det(\widehat Z_N)=\prod_{\substack{d\mid N\\ N_d>2}}\det G_d^{(Z)}.
\]

For \(X_N\), the block corresponding to \(d\) is
\[
G_d^{(X)}=\bigl(d\,\widehat x_d(uv^{-1})\bigr)_{u,v\in G_{N_d}},
\]
which is the usual group matrix of \(G_{N_d}\) attached to the function
\(t\mapsto d\,\widehat x_d(t)\). By the Dedekind factorization of a group
determinant,
\[
\det G_d^{(X)}
=
\prod_{\chi\in\widehat G_{N_d}}
\left(
d\sum_{t\in G_{N_d}}\widehat x_d(t)\chi(t)
\right).
\]
Multiplying over all \(d\mid N\), and using
\[
\det(\widehat X_N)=\det(F_N)\det(X_N),
\]
together with \eqref{eq:detF}, yields the formula for $\det(X_N)$.

For \(Y_N\), set
$H_{N_d}:=G_{N_d}/\{\pm1\}.$
Since \(\widehat y_d(-t)=\widehat y_d(t)\), the function
\(t\mapsto d\,\widehat y_d(t)\) descends to a well-defined function on \(H_{N_d}\).
The block
\[
G_d^{(Y)}=\bigl(d\,\widehat y_d(uv^{-1})\bigr)_{u,v\in T_{N_d}}
\]
is precisely the group matrix of \(H_{N_d}\) attached to this descended function.
The character group of \(H_{N_d}\) is naturally identified with
\(\widehat G_{N_d}^{+}\). Hence
\[
\det G_d^{(Y)}
=
\prod_{\chi\in\widehat G_{N_d}^{+}}
\left(
d\sum_{t\in H_{N_d}}\widehat y_d(t)\chi(t)
\right).
\]

Multiplying over all \(d\mid N\), using the fact $\sum_{t \in G_{N_d}} \widehat{y}_d(t) \chi(t)=2 \sum_{t \in H_{N_d}} \widehat{y}_d(t) \chi(t)$ for $N_d>2$ and
\[
\det(\widehat Y_N)=\det(C_N)\det(Y_N),
\]
together with \eqref{eq:detC}, gives the formula for $\det(Y_N)$.

The formula for \(\det(Y_N'|_{y_0=0})\) follows similarly from Lemma~\ref{lemma:Y-truncation}, the factorization
of $\det(G_d^{(Y)})$ and \eqref{eq:detC}.

For \(Z_N\), we have \(N_d>2\). 
For each odd character \(\chi\in\widehat G_{N_d}^{-}\), define
$$
w_\chi:=\left(\chi(x)^{-1}\right)_{x \in T_{N_d}} .
$$


Since \(\widehat z_d(-t)=-\widehat z_d(t)\) and \(\chi(-t)=-\chi(t)\), the product
\(\widehat z_d(t)\chi(t)\) depends only on the class \([t]\in H_{N_d}\). 
Therefore
\begin{align*}
\left(G_d^{(Z)} w_\chi\right)_u
&=
\sum_{v \in T_{N_d}} d \widehat{z}_d\left(u v^{-1}\right) \chi(v)^{-1} \\
&=
\chi(u)^{-1}\sum_{v \in T_{N_d}} d \widehat{z}_d\left(u v^{-1}\right)\chi\left(u v^{-1}\right) \\
&=
\left(d \sum_{t \in H_{N_d}} \widehat{z}_d(t) \chi(t)\right)\chi(u)^{-1}.
\end{align*}

Hence $w_\chi$ is an eigenvector of $G_d^{(Z)}$ with eigenvalue
$$
d \sum_{t \in H_{N_d}} \widehat{z}_d(t) \chi(t) .
$$

Since the odd characters form a set of cardinality \(\varphi(N_d)/2=|T_{N_d}|\), and they are
linearly independent, the vectors \(w_\chi\) form a basis of \(\mathbb C^{T_{N_d}}\).
It follows that
\[
\det G_d^{(Z)}
=
\prod_{\chi\in\widehat G_{N_d}^{-}}
\left(
d\sum_{t\in H_{N_d}}\widehat z_d(t)\chi(t)
\right).
\]
Multiplying over \(d\mid N\) with $N_d>2$, using the fact 
$$\sum_{t \in G_{N_d}} \widehat{z}_d(t) \chi(t)=2 \sum_{t \in H_{N_d}} \widehat{z}_d(t) \chi(t)$$ 
and
\[
\det(\widehat Z_N)=\det(S_N)\det(Z_N),
\]
together with \eqref{eq:detS}, gives the formula for $\det(Z_N)$.
\end{proof}
\begin{remark}\label{rk:dpair}
In Theorem~\ref{thm:det-XYZ-general}, the factor indexed by $d \mid N$
is naturally attached to the quotient modulus $N_d = N/d$.
For this reason, the character sums and group determinants arising there
are first expressed in terms of $N_d$.
In the later explicit formulas, such as Theorems~\ref{thm:det_B_formulas}
and~\ref{thm:det_T_formulas}, it is more convenient to write the final products
in terms of characters modulo $d$.
This causes no change, since replacing $N_d$ by $d$ in a product over divisors $d \mid N$ merely permutes the indexing set.
\end{remark}

\section{Applications of Theorem \ref{thm:det-XYZ-general}}
\subsection{Determinant of Bernoulli matrices}
We first recall the formula for the discrete sine and cosine transforms of periodic Bernoulli polynomials.
\begin{lemma}\label{lemma:Fourier-Bernoulli}
Let $y_{[a]}=\widetilde{B}_k(a / N)$ for $k$ even and $z_{\langle a\rangle}=\widetilde{B}_k(a / N)$ for $k$ odd. Let $d \mid N$, $N_d=N/d$ and $1\le t\le N_d-1$.
Then for $k$ even, we have
\begin{equation*}
\widehat y_N(0) = \frac{1}{2}B_k(0) = \frac{(-1)^{\frac{k}{2}+1}k!}{(2\pi)^{k}}\zeta(k),
\end{equation*}
and 
\begin{equation*}
\widehat y_d(t)
= \frac{(-1)^{\frac{k}{2}+1}k!}{2(2\pi)^{k}}\,N_d^{1-k}
\left(
\zeta\!\left(k,\frac{t}{N_d}\right)
+
\zeta\!\left(k,1-\frac{t}{N_d}\right)
\right),
\end{equation*}
where $\zeta(z)$ is the Riemann zeta function and $\zeta(z,a)$ is the Hurwitz zeta function.

For odd $k \neq 1$, we have
\begin{equation*}
\widehat z_d(t) =\frac{(-1)^{\frac{k+1}{2}} k!}{2(2\pi)^{k}}\,N_d^{1-k}
\left(
\zeta\!\left(k,\frac{t}{N_d}\right)
-
\zeta\!\left(k,1-\frac{t}{N_d}\right)
\right).
\end{equation*}
For $k = 1$, we have
\begin{align*}
\widehat z_d(t)=-\frac14\cot\!\Bigl(\frac{\pi t}{N_d}\Bigr).
\end{align*}
\end{lemma}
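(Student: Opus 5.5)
The plan is to compute everything from the Fourier expansion of the periodic Bernoulli function,
\[
\widetilde B_k(x)=-\frac{k!}{(2\pi\mathrm{i})^k}\sum_{n\neq0}\frac{e^{2\pi\mathrm{i}nx}}{n^k},
\]
valid for $k\ge2$ and all $x$. For $k=1$ it holds at non-integers, and at integers the symmetric partial sums give $0$, which matches the convention $\widetilde B_1(0)=0$. By Lemma~\ref{lemma:structure-hats} we have
\[
\widehat y_d(t)=\tfrac12\sum_{r=0}^{N_d-1}y_{[dr]}\omega_{N_d}^{tr},\qquad
\widehat z_d(t)=\tfrac1{2\mathrm{i}}\sum_{r=0}^{N_d-1}z_{\langle dr\rangle}\omega_{N_d}^{tr}.
\]
Since $dr/N=r/N_d$, the entries are $y_{[dr]}=\widetilde B_k(r/N_d)$, resp.\ $z_{\langle dr\rangle}=\widetilde B_k(r/N_d)$. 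The latter uses the oddness $\widetilde B_k(1-x)=-\widetilde B_k(x)$ for odd $k$ together with the vanishing at $0$ and $1/2$, so that the odd-reduction sign convention agrees with $\widetilde B_k$ itself.

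\textbf{Step 1 (the constant term).} The case $d=N$ is immediate: $N_d=1$, so $\widehat y_N(0)=\tfrac12 B_k(0)$. Euler's formula $B_k=(-1)^{k/2+1}2\,k!\,\zeta(k)/(2\pi)^k$ then gives the claimed value.

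\textbf{Step 2 (the finite sum).} For $1\le t\le N_d-1$, substitute the Fourier series and interchange the finite sum with the series. For $k\ge2$ this is justified by absolute convergence. Orthogonality gives
\[
\sum_{r=0}^{N_d-1}\omega_{N_d}^{(n+t)r}=N_d\,\mathbf 1_{n\equiv -t\ (N_d)}.
\]
Hence
\[
\sum_{r}\widetilde B_k(r/N_d)\omega_{N_d}^{tr}=-\frac{k!\,N_d}{(2\pi\mathrm{i})^k}\sum_{n\equiv -t}n^{-k}.
\]

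\textbf{Step 3 (Hurwitz zeta).} Split the sum according to the sign of $n$:
\[
\sum_{n\equiv -t,\,n>0}n^{-k}=N_d^{-k}\zeta(k,1-t/N_d),
\]
and the negative terms give $(-1)^kN_d^{-k}\zeta(k,t/N_d)$. For even $k$ this yields $\zeta(k,t/N_d)+\zeta(k,1-t/N_d)$, with prefactor $-\tfrac12 k!N_d^{1-k}/((2\pi)^k(-1)^{k/2})$, which matches the claim. For odd $k$, the extra factor $\tfrac1{2\mathrm{i}}$ combines with $\mathrm{i}^{-k}$ to produce the sign $(-1)^{(k+1)/2}$ and the difference $\zeta(k,t/N_d)-\zeta(k,1-t/N_d)$. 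The only real work here is bookkeeping of powers of $\mathrm{i}$.

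\textbf{Step 4 (the case $k=1$, the main obstacle).} The series is only conditionally convergent, so for $k=1$ I would avoid it and compute directly. By oddness,
\[
\widehat z_d(t)=\frac{1}{2\mathrm{i}}\sum_{r=1}^{N_d-1}\Bigl(\frac r{N_d}-\frac12\Bigr)\omega^{tr}.
\]
For $\omega=\omega_{N_d}^t\ne1$ we have $\sum_{r=1}^{N_d-1}\omega^r=-1$ and $\sum_{r=0}^{N_d-1} r\omega^r=N_d/(\omega-1)$. The sum is therefore
\[
\frac1{\omega-1}+\frac12=\frac{\omega+1}{2(\omega-1)}=-\frac{\mathrm{i}}2\cot(\pi t/N_d).
\]
Multiplying by $\tfrac1{2\mathrm{i}}$ gives $-\tfrac14\cot(\pi t/N_d)$, as claimed.
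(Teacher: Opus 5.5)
Your proof is correct, and it takes a different route from the paper. The paper does not compute anything at this point. It rewrites $\widehat y_d(t)$ and $\widehat z_d(t)$ as the sums $\frac12\sum_{r=1}^{N_d}\widetilde B_k(r/N_d)\cos(2\pi tr/N_d)$ and $\frac12\sum_{r=1}^{N_d}\widetilde B_k(r/N_d)\sin(2\pi tr/N_d)$. It then quotes parts (iv) and (iii) of Corollary~2 in \cite{CK00} for $k\ge2$, calls the value of $\widehat y_N(0)$ classical, and takes the $k=1$ cotangent evaluation from \cite{RG72}.

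You instead derive every case directly:
\begin{itemize}
\item for $k\ge 2$, from the Fourier series of $\widetilde B_k$ and orthogonality of $N_d$-th roots of unity;
\item for $k=1$, from an elementary geometric-sum computation that avoids the conditionally convergent series.
\end{itemize}
Your sign bookkeeping checks out: $\mathrm{i}^{-k}=(-1)^{k/2}$ for even $k$, $\mathrm{i}^{-(k+1)}=(-1)^{(k+1)/2}$ for odd $k$, and $\sum_{r=0}^{M-1}r\omega^r=M/(\omega-1)$ for $\omega^M=1\neq\omega$.

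What your version buys is self-containedness, plus two checks the paper leaves to the reader. First, the odd-reduction convention $z_{\langle a\rangle}$ agrees with $\widetilde B_k(a/N)$ at every residue. This uses $\widetilde B_k(0)=\widetilde B_k(1/2)=0$ for odd $k$, with the convention $\widetilde B_1(0)=0$. Second, the normalizations line up with the factor $\frac12$ in the definitions here.

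Your intermediate identity $\sum_{r}\widetilde B_k(r/N_d)\omega_{N_d}^{tr}=-\frac{k!\,N_d}{(2\pi\mathrm{i})^k}\sum_{n\equiv -t\ (N_d)}n^{-k}$ is also useful later. It is exactly the residue-class grouping the paper performs in the proof of Theorem~\ref{thm:det_B_formulas} to reach $L(k,\chi)$. In your approach the Hurwitz-zeta form is therefore only a convenient waypoint. The paper's route is shorter on the page but rests entirely on the references.

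A minor remark: the exponential-sum expressions for $\widehat y_d$ and $\widehat z_d$ that you attribute to Lemma~\ref{lemma:structure-hats} appear in its proof rather than its statement. They follow immediately from the definitions by pairing $r$ with $N_d-r$.
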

\begin{proof}
By definition, we have 
\begin{align*}
\widehat y_d(t) = \frac{1}{2}\sum_{r=1}^{N_d}\widetilde{B}_k(r/N_d)\cos\!\Bigl(\frac{2\pi tr}{N_d}\Bigr),\\ 
\widehat z_d(t) = \frac{1}{2}\sum_{r=1}^{N_d}\widetilde{B}_k(r/N_d)\sin\!\Bigl(\frac{2\pi tr}{N_d}\Bigr).
\end{align*}
The lemma is a direct application of parts (iv) and (iii) of \cite[Corollary 2]{CK00}, in the cases where 
$k$ is even and $t\neq 0$, and where $k\neq 1$ is odd, respectively.
The formula for $\widehat y_N(0)$ is classical.
For $k=1$, we apply \cite[page 14]{RG72}.
\end{proof}

\begin{theorem}\label{thm:det_B_formulas}
For $k$ even, we have
\begin{equation*}
\det(B_{k,N})
=
(-1)^{\frac{N^+(N^++1)}{2}} N^{\frac{N^+ + 1}{2}} 2^{N^+ +1 - N^-}\cdot
\prod_{d\mid N}\ \prod_{\substack{\chi\ (\mathrm{mod}\ d)\\ \chi(-1)=1}}
\left(\frac{(-1)^{\frac{k}{2}+1}k!}{(2\pi)^k}\,L(k,\chi)\right),
\end{equation*}
where for $d = 1$, the unique character is trivial and $L(k,\chi)=\zeta(k)$.

For $k$ odd, we have
\begin{equation*}
\det(B_{k,N})
=
(-1)^{\frac{N^-(N^- -1)}{2}} N^{\frac{N^-}{2}}\cdot
\prod_{\substack{d\mid N \\ d>2}}\ \prod_{\substack{\chi\ (\mathrm{mod}\ d)\\ \chi(-1)=-1}}
\left(\frac{(-1)^{\frac{k+1}{2}}k!}{(2\pi)^k}\,L(k,\chi)\right).
\end{equation*}
In particular, $\det(B_{k,N})$ does not vanish.
\end{theorem}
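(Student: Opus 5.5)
We specialize Theorem~\ref{thm:det-XYZ-general}. For $k$ even, put $y_{[a]}=\widetilde B_k(\bar a/N)$; this is well defined on $\{\pm1\}$-orbits because $B_k(1-x)=B_k(x)$. With this choice $Y_N=B_{k,N}$. For $k$ odd, put $z_r=\widetilde B_k(r/N)$ for $1\le r\le N^-$. The odd reduction then gives $z_{\langle a\rangle}=\widetilde B_k(\bar a/N)$ for every $a$, since $B_k(1-x)=-B_k(x)$ for odd $k$. At $a=0$ and $a=N/2$ we also need $\widetilde B_k=0$. This holds because $B_k(0)=B_k(1/2)=0$ for odd $k\ge3$, and for $k=1$ it holds by the convention $\widetilde B_1(0)=0$ together with $B_1(1/2)=0$. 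Hence $Z_N=B_{k,N}$. It then remains to evaluate the character sums $d\sum_{t\in G_{N_d}}\widehat y_d(t)\chi(t)$ and $d\sum_{t\in G_{N_d}}\widehat z_d(t)\chi(t)$, and to collect the constants.

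For $k$ even and $N_d>1$, Lemma~\ref{lemma:Fourier-Bernoulli} gives $\widehat y_d(t)=c_k\,N_d^{1-k}\bigl(\zeta(k,t/N_d)+\zeta(k,1-t/N_d)\bigr)/2$, where $c_k=(-1)^{k/2+1}k!/(2\pi)^k$. For even $\chi$ mod $N_d$ the two Hurwitz terms contribute equally after $t\mapsto -t$. We then use $\sum_{t\in G_{N_d}}\chi(t)\zeta(k,t/N_d)=N_d^{k}L(k,\chi)$, treating $\chi$ as a possibly imprimitive character mod $N_d$ so that no primitivity issue arises. This yields
\[
d\sum_{t}\widehat y_d(t)\chi(t)=d\,c_k\,N_d\,L(k,\chi)=N c_k L(k,\chi).
\]
For $d=N$ the single factor is $N\widehat y_N(0)=Nc_k\zeta(k)$, which is the same formula with $\chi$ trivial mod $1$. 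The odd case is identical: the minus sign combines with $\chi(-1)=-1$, and each factor becomes $N c_k' L(k,\chi)$ with $c_k'=(-1)^{(k+1)/2}k!/(2\pi)^k$. For $k=1$ the cotangent formula gives the same result via the classical identity $\sum_t\chi(t)\cot(\pi t/q)=\frac{2 q\mathrm{i}}{\pi}\cdot(\ldots)L(1,\chi)$. The most reliable route is to note that Lemma~\ref{lemma:Fourier-Bernoulli}'s Hurwitz formula extends to $k=1$ when the two divergent terms are paired as a difference, since $\zeta(1,a)-\zeta(1,1-a)=-\pi\cot(\pi a)$. So $k=1$ needs no separate treatment beyond checking this constant.

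Next comes the constant bookkeeping. In the even case the number of factors is $\sum_{d\mid N}|\widehat G_{N_d}^+|$. For $N_d>2$ this is $\varphi(N_d)/2$, and for $N_d\le2$ it is $1$; the total is $N^++1$, the size of $Y_N$. Hence the factors $N$ contribute $N^{N^++1}$. Multiplying by the prefactor $N^{-(N^++1)/2}$ from Theorem~\ref{thm:det-XYZ-general} leaves $N^{(N^++1)/2}$, together with the sign and the power $2^{N^++1-N^-}$. In the odd case the count is $\sum_{N_d>2}\varphi(N_d)/2=N^-$, giving $N^{N^-}\cdot N^{-N^-/2}=N^{N^-/2}$. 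Re-indexing $N_d\to d$ as in Remark~\ref{rk:dpair} gives the stated products over characters mod $d$.

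Finally, nonvanishing follows because $L(k,\chi)\ne0$ for $k\ge2$ (Euler product) and $L(1,\chi)\ne0$ (Dirichlet). The main obstacle I expect is the exact normalization: getting the factors of $2$ right in the half-sums $\tfrac12\sum_{r=1}^{N_d}$ and in the passage between $G_{N_d}$ and $H_{N_d}$. In particular, the $N_d\le 2$ blocks in the $Y$ case are not doubled, and this must be reconciled with the $2^{N^++1-N^-}$ prefactor. I would double-check this by computing small cases such as $N=2,3,4$ directly.
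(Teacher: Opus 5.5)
Your proposal follows the paper's proof. You specialize Theorem~\ref{thm:det-XYZ-general} with $Y_N=B_{k,N}$ or $Z_N=B_{k,N}$, and you justify this identification, including the zero entries at $a=0$ and $a=N/2$, more carefully than the paper does. You then convert each character sum from Lemma~\ref{lemma:Fourier-Bernoulli} into $N c_k L(k,\chi)$, count $N^++1$ (resp.\ $N^-$) factors, and reindex. Your bookkeeping of the powers of $N$ and $2$ is correct. The undoubled $N_d\le 2$ blocks are already absorbed into the prefactor $2^{N^++1-N^-}$ of Theorem~\ref{thm:det-XYZ-general}, so nothing further needs reconciling.

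The one thing to fix is the $k=1$ step, where your sign is wrong. Read the paired difference as $\lim_{s\to1}\bigl(\zeta(s,a)-\zeta(s,1-a)\bigr)=\psi(1-a)-\psi(a)$, or equivalently as $\sum_{n\ge0}\bigl(\frac{1}{n+a}-\frac{1}{n+1-a}\bigr)$. It equals $+\pi\cot(\pi a)$, not $-\pi\cot(\pi a)$; note it is about $1/a>0$ for small $a$.

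With your sign, the extended Hurwitz formula would give $\widehat z_d(t)=+\frac14\cot(\pi t/N_d)$, which contradicts Lemma~\ref{lemma:Fourier-Bernoulli}. If you instead combine the lemma's $-\frac14\cot$ with your identity, every $k=1$ factor flips sign, producing a spurious $(-1)^{N^-}$. For example, $\det B_{1,3}=\widetilde B_1(1/3)=-\frac16$, but your sign would give $+\frac16$.

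There is also no $\mathrm{i}$ in the cotangent identity. For odd $\chi$ mod $q$, possibly imprimitive, one has $\sum_{t=1}^{q-1}\chi(t)\cot(\pi t/q)=\frac{2q}{\pi}L(1,\chi)$, which is what the paper uses. Then $d\sum_t\widehat z_d(t)\chi(t)=-\frac{d}{4}\cdot\frac{2N_d}{\pi}L(1,\chi)=-\frac{N}{2\pi}L(1,\chi)=N c_1' L(1,\chi)$, as required.

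Your Hurwitz route works once the sign is corrected. Avoid writing the individually divergent sum $\sum_t\chi(t)\zeta(1,t/q)$. Instead, pass to $s=1$ by continuity in $\sum_t\chi(t)\bigl(\zeta(s,t/q)-\zeta(s,1-t/q)\bigr)=2q^sL(s,\chi)$. Both sides are analytic at $s=1$, because the poles cancel on the left and $\chi$ is nontrivial on the right.
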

\begin{proof}
Let $\chi$ be a character modulo $N_d$. We have the following fact by expressing 
$L(k,\chi)$ as $\sum_{n\ge 1} \chi(n)n^{-k}$ and grouping terms by residue classes modulo $N_d$
\begin{equation*}
\sum_{t=1}^{N_d-1} \chi(t)(\zeta(k, t / N_d) \pm \zeta(k, 1-t / N_d))=2 N_d^k L(k, \chi)
\end{equation*}
where the sign is plus when $k$ and $\chi$ are even, and minus when $k\neq 1$ and $\chi$ are odd. For $k=1$ and $\chi$ odd, we have
$\sum_{t=1}^{N_d-1} \chi(t) \cot (\pi t / N_d) = \frac{2}{\pi}N_d L(1, \chi)$ (see \cite[Corollary 6]{F90}).

Then the formula for $\det(B_{k,N})$ is a direct application of Theorem~\ref{thm:det-XYZ-general}, Lemma~\ref{lemma:Fourier-Bernoulli} 
and the reindexing explained in Remark~\ref{rk:dpair}.

Since $L(k,\chi)\neq 0$ for $k \ge 1$, the determinant does not vanish.
\end{proof}
\begin{remark}
  Note that the products in the above theorem are Galois invariant, so the results are rational as expected.
\end{remark}

\subsection{Determinant of tangent matrices and a conjecture of Sun}
We first give a lemma on the discrete sine and cosine transforms of the powers of the tangent function.
\begin{lemma}\label{lemma: Fourier-tangent}
Assume $N=2n+1$. For $0\le a\le n$ and $k\ge 1$, set
$$y_{[a]}=\tan^{2k}\!\left(\frac{\pi a}{N}\right),$$
and for $1\le a\le n$ and $k\ge 0$, set
$$z_{\langle a\rangle}=\tan^{2k+1}\!\left(\frac{\pi a}{N}\right).$$
For \(d\mid N\),  $d\neq N$, put \(N_d=N/d\) and \(1\le t\le N_d-1\).

Then we have 
\begin{align*}
\widehat y_d(t)
&= \sum_{s=1}^{k} a_{k,s}\, N_d^{2s} \left( \widetilde{B}_{2s}\!\left(\frac{t}{N_d}\right) - 2^{2s} \widetilde{B}_{2s}\!\left(\frac{(n+1) t}{N_d}\right) \right), \\
\widehat z_d(t) &= \sum_{s=0}^{k} A_{k,s}\, N_d^{2s+1} \left( \widetilde{B}_{2s+1}\!\left(\frac{t}{N_d}\right) - 2^{2s+1} \widetilde{B}_{2s+1}\!\left(\frac{(n+1) t}{N_d}\right) \right).
\end{align*}
The rational coefficients $a_{k,s}$ and $A_{k,s}$ are universal constants independent of $N$ and $d$, given by the following formulas
\begin{align*}
a_{k,s} &= \frac{(-1)^{k}}{2(2k-1)!}\binom{2k-1}{2s-1}\frac{1}{2s} \sum_{\alpha=0}^{2k}\binom{2k}{\alpha}\,B^{(2k)}_{\,2k-2s}(\alpha) \\
A_{k,s} &= \frac{(-1)^{k+1}}{2(2k)!}\binom{2k}{2s}\frac{1}{2s+1} \sum_{\beta=0}^{2k+1}\binom{2k+1}{\beta}\,B^{(2k+1)}_{\,2k-2s}(\beta)
\end{align*}
where $B_r^{(q)}(x)$ is the higher-order Bernoulli polynomial defined by
$$\left(\frac{u}{e^u-1}\right)^q e^{xu} = \sum_{r=0}^{\infty} B_r^{(q)}(x)\frac{u^r}{r!}.$$
\end{lemma}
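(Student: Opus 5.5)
The plan is to reduce the two identities to one statement: a partial–fraction expansion of $\tan^m(\pi x)$ matched against the Fourier series of the periodic Bernoulli functions. Fix $d\mid N$ with $d\ne N$, and put $M:=N_d$, which is odd and $>1$. The arguments $dr$ give $\tan(\pi dr/N)=\tan(\pi r/M)$. Since $\tan$ is $\pi$-periodic and odd, the odd reduction gives $z_{\langle dr\rangle}=\tan^{2k+1}(\pi r/M)$ for every $r\in\Z/M\Z$, and similarly $y_{[dr]}=\tan^{2k}(\pi r/M)$. The second expressions for $\widehat y_d$ and $\widehat z_d$ in their definitions then give
\[
\widehat y_d(t)=\tfrac12\sum_{r=0}^{M-1}\tan^{2k}\!\Bigl(\tfrac{\pi r}{M}\Bigr)\cos\!\Bigl(\tfrac{2\pi tr}{M}\Bigr),\qquad
\widehat z_d(t)=\tfrac12\sum_{r=0}^{M-1}\tan^{2k+1}\!\Bigl(\tfrac{\pi r}{M}\Bigr)\sin\!\Bigl(\tfrac{2\pi tr}{M}\Bigr).
\]
So it suffices to compute $\sum_{r}\tan^m(\pi r/M)\,\omega_M^{tr}$ for $1\le t\le M-1$ and then take real or imaginary parts.

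\textbf{Step 1 (partial fractions of $\tan^m$).} The function $\tan^m(\pi x)$ is $1$-periodic and meromorphic, with poles only at $x\in\frac12+\Z$, and it is bounded as $\operatorname{Im}x\to\pm\infty$. By the Mittag-Leffler argument, for $m\ge1$ it can therefore be written as
\[
\tan^m(\pi x)=c_m+\sum_{s=1}^{m}\kappa_{m,s}\sum_{j\in\Z}\frac{1}{(x-\tfrac12-j)^s}
\]
(with the usual symmetric summation when $s=1$). Here the $\kappa_{m,s}$ are the principal-part coefficients of $\tan^m(\pi x)=(-1)^m\cot^m(\pi(x-\tfrac12))$ at $x=\tfrac12$. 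The principal part of $\cot^m$ at $0$ is read off from $\cot(\pi y)=\mathrm i\,(e^{2\pi\mathrm i y}+1)/(e^{2\pi\mathrm i y}-1)$ and the generating function $(u/(e^u-1))^q e^{xu}$. Expanding $(e^u+1)^m=\sum_\alpha\binom m\alpha e^{\alpha u}$ produces exactly the sums $\sum_\alpha\binom{m}{\alpha}B^{(m)}_{m-s}(\alpha)$ that appear in $a_{k,s}$ and $A_{k,s}$. Parity kills every other $s$, leaving even $s=2s'$ for $m=2k$ and odd $s=2s'+1$ for $m=2k+1$. The constant $c_m$ contributes nothing to the transform because $t\not\equiv0\pmod M$; this is where the hypotheses $t\ne0$ and $d\ne N$ are used.

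\textbf{Step 2 (transform of a single lattice sum).} Put $x=r/M$ and sum against $\omega_M^{tr}$. I would run the same computation in reverse, starting from the Fourier series
\[
\widetilde B_s(x)=-\frac{s!}{(2\pi\mathrm i)^s}\sum_{j\ne0}\frac{e^{2\pi\mathrm i jx}}{j^s}.
\]
Form $g_s(t):=\widetilde B_s(t/M)-2^s\widetilde B_s((n+1)t/M)$, and use $n+1\equiv2^{-1}\pmod M$ because $N$ is odd. By the multiplication formula $B_s(2x)=2^{s-1}\bigl(B_s(x)+B_s(x+\tfrac12)\bigr)$, $g_s(t)$ is a Fourier series in $t/M$ supported only on odd half-shifted frequencies. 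Equivalently, its inverse DFT in $t$ is, up to the factor $M^{s}$ and an explicit constant $s!/(2\pi\mathrm i)^s$, exactly the lattice sum $\sum_j (r/M-\tfrac12-j)^{-s}$ from Step 1.

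\textbf{Step 3 (assembly).} By orthogonality of characters of $\Z/M\Z$, Steps 1 and 2 express $\sum_r\tan^m(\pi r/M)\,\omega_M^{tr}$ as $\sum_s(\text{const})\,M^{s}g_s(t)$. Taking real parts for $m=2k$ and imaginary parts for $m=2k+1$, with the prefactor $\tfrac12$, gives the stated shape, with $M^{2s}$ or $M^{2s+1}$ and the combination $\widetilde B(t/M)-2^{\bullet}\widetilde B((n+1)t/M)$. The coefficients are universal because $\kappa_{m,s}$ depends only on $m$ and $s$.

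I expect the main obstacle to be the bookkeeping of constants: checking that the powers of $2\pi\mathrm i$, the signs $(-1)^k$, the factor $\binom{m-1}{s-1}/s$ (which comes from converting $\frac{d^{s-1}}{dx^{s-1}}$ of the order-$1$ lattice sum into order-$s$ sums), and the factor $\frac12$ combine exactly into $a_{k,s}$ and $A_{k,s}$. I would first pin the normalisation in the simplest case $m=1$, where $\tan(\pi x)=\cot(\pi x)-2\cot(2\pi x)$ and Lemma~\ref{lemma:Fourier-Bernoulli} with $k=1$ applies. I would then carry the general case through using the principal-part recursion.
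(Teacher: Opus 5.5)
Your argument is correct, and it takes a genuinely different route from the paper. The paper computes nothing from first principles here. It quotes Cvijovi\'c's closed-form evaluations of the discrete sine and cosine transforms of $\tan^m$, which are stated with Euler polynomials $E_{m-1}(t/N_d)$ and higher-order Bernoulli numbers. It then rewrites $E_{m-1}(x)=\frac{2}{m}\bigl(B_m(x)-2^mB_m(x/2)\bigr)$ and proves \eqref{eq:B} by splitting on the parity of $t$ and using the duplication formula; that is where the sign $(-1)^t$ and the argument $(n+1)t/N_d$ enter.

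In your approach the periodic combination $g_s(t)=\widetilde B_s(t/M)-2^s\widetilde B_s((n+1)t/M)$ appears directly. The map $(r,j)\mapsto\nu=2r-M(2j+1)$ is a bijection from $\{0,\dots,M-1\}\times\Z$ onto the odd integers, with $r\equiv(n+1)\nu\pmod M$. Also $\sum_{\nu\ \mathrm{odd}}\nu^{-s}e^{2\pi\mathrm{i}\nu y}=-\frac{(2\pi\mathrm{i})^s}{s!}\bigl(\widetilde B_s(y)-2^{-s}\widetilde B_s(2y)\bigr)$. Together these give $\sum_{r}L_s(r/M)\,\omega_M^{tr}=\frac{(2\pi\mathrm{i}M)^s}{s!}\,g_s(t)$ for $L_s(x)=\sum_j(x-\frac12-j)^{-s}$, so no parity split is needed. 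Your route also explains the sums $\Sigma_{m,s}:=\sum_\alpha\binom{m}{\alpha}B^{(m)}_{m-s}(\alpha)$: they enter exactly through the principal-part coefficients $\kappa_{m,s}=(-\mathrm{i})^m(2\pi\mathrm{i})^{-s}\Sigma_{m,s}/(m-s)!$ of $\tan^m(\pi x)$ at $x=\frac12$.

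The price is that you must justify the expansion yourself, via Liouville and symmetric summation for the $s=1$ terms (which occur only for odd $m$), and carry the constants. That bookkeeping closes, and more simply than you expect:
\[
\sum_r\tan^m(\pi r/M)\,\omega_M^{tr}=(-\mathrm{i})^m\sum_s\frac{\Sigma_{m,s}}{s!\,(m-s)!}M^s g_s(t).
\]
Since $\frac{1}{(2k-1)!}\binom{2k-1}{2s-1}\frac{1}{2s}=\frac{1}{(2s)!\,(2k-2s)!}$, and likewise for $A_{k,s}$, the factor you attribute to differentiating lattice sums is just $1/(s!\,(m-s)!)$. It comes from the Taylor coefficient and the Fourier coefficient of $\widetilde B_s$. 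The prefactors $\frac12(-\mathrm{i})^{2k}$ and $\frac{1}{2\mathrm{i}}(-\mathrm{i})^{2k+1}$ then give exactly $(-1)^k/2$ and $(-1)^{k+1}/2$, matching $a_{k,s}$ and $A_{k,s}$.
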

\begin{proof}
The formulas for these discrete sine and cosine transforms are given in \cite{C08} (see also \cite[(2.7a), (2.8a)]{CS12}) using Euler polynomials and higher-order Bernoulli polynomials. We can replace the Euler polynomials in those formulas by Bernoulli polynomials using the well-known relation
\begin{equation*}
E_{m-1}(x)=\frac{2}{m}\left(B_{m}(x)-2^{m} B_{m}(x / 2)\right).
\end{equation*}
We claim 
\begin{equation}\label{eq:B}
B_m\!\left(\frac{t}{N_d}\right)-2^m B_m\!\left(\frac{t}{2N_d}\right) = (-1)^t\left(\widetilde B_m\!\left(\frac{t}{N_d}\right)-2^m \widetilde B_m\!\left(\frac{(n+1)t}{N_d}\right)\right).
\end{equation}
Combining these two formulas, we obtain our result.

Now we prove the claim. Since
$$\frac{(n+1)t}{N_d}=\frac{dt}{2}+\frac{t}{2N_d},$$
we distinguish two cases according to the parity of $t$.

If $t$ is even, then \(t/2\in\mathbb Z\), so by periodicity of $\widetilde B_m(x)$ we prove the claim.

If \(t\) is odd, then
\[
\frac{(n+1)t}{N_d}\equiv \frac12+\frac{t}{2N_d}\pmod{1}.
\]
Applying the duplication formula for the Bernoulli polynomial
\[
B_m(x)=2^{m-1}\!\left(B_m(x/2)+B_m\!\left(\frac{x+1}{2}\right)\right)
\]
with \(x=t/N_d\), we get
\begin{equation*}
B_m\!\left(\frac{t}{N_d}\right)-2^mB_m\!\left(\frac12+\frac{t}{2N_d}\right)
=
-\left(B_m\!\left(\frac{t}{N_d}\right)-2^mB_m\!\left(\frac{t}{2N_d}\right)\right).
\end{equation*}
\end{proof}

\begin{example}
For small exponents, it is useful to have the coefficients $a_{k,s}$ and $A_{k,s}$ written out explicitly. 
For $k \le 4$, the values of $a_{k,s}$ are:
    \begin{align*}
        a_{1,1} &= \textstyle -1, \\
        a_{2,1} &= \frac{4}{3}, a_{2,2}=\frac{1}{3}, \\
        a_{3,1} &= -\frac{23}{15}, a_{3,2}= -\frac{2}{3}, a_{3,3}= -\frac{2}{45}, \\
        a_{4,1} &= \frac{176}{105}, a_{4,2}=\frac{44}{45}, a_{4,3}=\frac{16}{135}, a_{4,4}=\frac{1}{315}, \\
    \end{align*}
 and the values of $A_{k,s}$ are:
    \begin{align*}
        A_{0,0} &= -1, \\
        A_{1,0} &= 1, A_{1,1} = \frac{2}{3}, \\
        A_{2,0} &= -1, A_{2,1}=-\frac{10}{9}, A_{2,2}=-\frac{2}{15}, \\
        A_{3,0} &= 1, A_{3,1}=\frac{196}{135}, A_{3,2}=\frac{14}{45}, A_{3,3}=\frac{4}{315}, \\
        A_{4,0} &= -1, A_{4,1}=-\frac{1636}{945}, A_{4,2}=-\frac{38}{75}, A_{4,3}=-\frac{4}{105}, A_{4,4}=-\frac{2}{2835}.
    \end{align*}
\end{example}

\begin{example}\label{ex:yzhat}
Applying Lemma~\ref{lemma: Fourier-tangent} and \eqref{eq:B}, for $k=1$ in the even case and $k=0$ in the odd case, we have, for \(1\le t\le N_d-1\), 
$$d\widehat y_d(t) = (-1)^{t+1}N N_d\left(B_2\left(\frac{t}{N_d}\right)-4 B_2\left(\frac{t}{2N_d}\right)\right)= \frac{N}{2}(-1)^{t+1}(2t-N_d),$$
and
$$d \widehat z_d(t) = (-1)^{t+1}N\left(B_1\left(\frac{t}{N_d}\right)-2 B_1\left(\frac{t}{2N_d}\right)\right)=\frac{N}{2}(-1)^{t+1}.$$
\end{example}

Now we can prove a corrected version of a conjecture of Sun \cite[Conjecture 5.2]{S24}. 
The first statement in the following corollary strengthens his conjecture, while the second corrects the power of 4 in that conjecture.
\begin{corollary}\label{cor:Sun}
With the above notation, we have the following:
\begin{enumerate}
\item[(i)]
Let $s_n=N^{-\frac{n}{2}} \det(T_{1,N})$. Then $s_n \in 2^{n+1-\tau(N)} \Z$ where $\tau(N)$ is the number of divisors of $N$.
\item[(ii)] We have
$$
\det(T_{2,N}) \in N^{\frac{n+1}{2}} 4^{n+1-\tau(N)} \Z .
$$
\item[(iii)]  There is a matrix $T(n)=\left(t_{j k}\right)_{1 \leqslant j, k \leqslant n}$ with entries in $\{0, \pm 1\}$ such that
$$
2 \sum_{k=1}^n t_{j k} \sin \frac{\pi k}{N}=\tan \frac{\pi j}{N} \quad \text { for all } j=1, \ldots, n .
$$
Let $t_n=\det(T(n))$. Then $s_n=-t_n$ if $n \equiv 3\pmod 4$, and $s_n=t_n$ otherwise.
\end{enumerate}
\end{corollary}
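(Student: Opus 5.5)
For (i) and (ii), the plan is to apply Theorem~\ref{thm:det-XYZ-general} at the level of the group matrices $G_d^{(Z)}$ and $G_d^{(Y)}$ of Lemma~\ref{lemma:block-structure}, rather than through the character factorization. The point is that Example~\ref{ex:yzhat} makes every block a scalar multiple of a matrix with very rigid integer entries.

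\emph{Part (i).} Take $z_{\langle a\rangle}=\tan(\pi a/N)$, so $Z_N=T_{1,N}$. Since $N$ is odd, the condition $N_d>2$ just means $d\neq N$. By Example~\ref{ex:yzhat} every entry of $G_d^{(Z)}$ equals $\pm N/2$. Hence $G_d^{(Z)}=\tfrac N2 M_d$, where $M_d$ is a $\pm1$ matrix of size $r_d=\varphi(N_d)/2$.

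For a $\pm1$ matrix, subtracting the first row from the others leaves entries in $\{0,\pm2\}$ in $r_d-1$ rows. Therefore $2^{r_d-1}\mid\det M_d$.

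Summing over $d$ gives $\sum_{d\ne N}r_d=n$, and the number of blocks is $\tau(N)-1$. Using $\det(\widehat Z_N)=\det(S_N)\det(Z_N)$ and \eqref{eq:detS} with $\det S_N=\pm N^{n/2}2^{-n}$, I get
\[
\det(T_{1,N})=\pm N^{n/2}2^{n}\cdot 2^{-n}\prod_{d\ne N}\det M_d .
\]
So $s_n=\pm\prod\det M_d\in 2^{\,n-(\tau(N)-1)}\Z$, which is (i).

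\emph{Part (ii).} Take $y_{[a]}=\tan^2(\pi a/N)$. Then $y_0=0$ automatically, so $T_{2,N}=Y_N'|_{y_0=0}$, and Lemma~\ref{lemma:Y-truncation} applies. By Example~\ref{ex:yzhat}, $G_d^{(Y)}=\tfrac N2 M_d$ with entries $(-1)^{t+1}(2t-N_d)$.

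The key observation is that these entries are all congruent to $N_d$ mod $4$. For $t$ even the entry is $N_d-2t\equiv N_d$. For $t$ odd it is $2t-N_d\equiv 2-N_d\equiv N_d \pmod 4$, because $N_d$ is odd. Thus $M_d=N_dJ+4M'$ with $M'$ integral.

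Since $J$ has rank one, expanding $\det(N_dJ+4M')$ multilinearly gives every term divisible by $4^{r_d-1}$. Plugging into Lemma~\ref{lemma:Y-truncation} with \eqref{eq:detC}, where $\det C_N=\pm N^{(n+1)/2}2^{-(n+1)}$, I get
\[
\det T_{2,N}=\pm N^{-(n+1)/2}2^{n+1}\cdot\tfrac N2\cdot(\tfrac N2)^n\prod\det M_d=\pm N^{(n+1)/2}\prod_{d\ne N}\det M_d .
\]
This lies in $N^{\frac{n+1}2}4^{\,n+1-\tau(N)}\Z$.

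\emph{Part (iii).} First I construct $T(n)$ by inverting the sine transform, using $S_N^{-1}=\tfrac4N S_N$. Apply this to the $k=1$ column, for which Example~\ref{ex:yzhat} (with $d=1$) gives the transform $\tfrac N2(-1)^{m+1}$. This yields
\[
\tan\frac{\pi j}{N}=2\sum_{m=1}^{n}(-1)^{m+1}\sin\frac{2\pi jm}{N}.
\]
I then rewrite each $\sin(2\pi jm/N)$ as $\pm\sin(\pi k j/N)$ with $1\le k\le n$, via the permutation $m\mapsto k\equiv\pm 2m \pmod N$ (with the sign absorbed). This produces a matrix $P$ with entries in $\{0,\pm1\}$ such that $\tan(\pi j/N)=2\sum_k P_{k}\sin(\pi kj/N)$. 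In particular I can take $T(n)$ from $P$.

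Because this identity holds for every integer $j$ (both sides are odd and $N$-periodic), I may replace $j$ by $ij$. That gives the matrix factorization $T_{1,N}=2\,\Sigma\,T(n)^{\mathsf T}$-type relation, where $\Sigma=(\sin(\pi ik/N))_{1\le i,k\le n}$. Relating $\Sigma$ to $S_N$ by the same permutation $k\mapsto 2k$ and sign changes gives $\det\Sigma=\pm\det S_N$. Hence $s_n=\pm t_n$.

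The main obstacle I expect is the sign bookkeeping. It combines the sign in \eqref{eq:detS}, the sign of the permutation $k\mapsto \pm2k \bmod N$ on $\{1,\dots,n\}$ together with the number of sign flips, and the Jacobi-symbol-like sign of multiplication by $2$. I would first compute these for small $n$ and then prove that the total sign is $-1$ exactly when $n\equiv3\pmod 4$.
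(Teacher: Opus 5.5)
\textbf{Parts (i) and (ii).} These are correct and follow the paper's own argument. The diagonal blocks of Lemma~\ref{lemma:block-structure} are $\frac N2$ times matrices whose entries are $\pm1$ in case (i), and $\equiv N_d \pmod 4$ in case (ii). Row reduction then gives the factors $2^{r_d-1}$ and $4^{r_d-1}$; your rank-one expansion of $N_dJ+4M'$ is the same argument. Formulas \eqref{eq:detS} and \eqref{eq:detC} finish the proof.

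\textbf{Part (iii) has a genuine gap.} You convert $\sin(2\pi jm/N)$ into $\pm\sin(\pi kj/N)$ with a sign independent of $j$, and this is false. For $2m>n$ one has $k=N-2m$, which is odd, and
\[
\sin\frac{2\pi jm}{N}=(-1)^{j+1}\sin\frac{\pi kj}{N},
\]
so the sign depends on the parity of $j$. Equivalently, for odd $k$ the function $\sin(\pi kj/N)$ is not $N$-periodic in $j$: it changes sign under $j\mapsto j+N$. So the substitution $j\mapsto ij$ is not legitimate.

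The resulting factorization $T_{1,N}=2\Sigma T(n)^{\mathsf T}$ and the claim $\det\Sigma=\pm\det S_N$ are both wrong. For $N=5$,
\[
\det\Sigma=\sin^2\tfrac{\pi}{5}-\sin^2\tfrac{2\pi}{5}=-\tfrac{\sqrt5}{4},
\]
whereas $\det S_5=-\tfrac54$ and $\det T_{1,5}=-10$. Any $T(n)$ satisfying your factorization would therefore have determinant $2\sqrt5$, which is not an integer.

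There is a second problem. Transforming only the first column of $T_{1,N}$ gives a single coefficient vector, not an $n\times n$ matrix whose determinant you can control. In the paper, row $j$ of $T(n)$ comes from the sine transform of the $j$-th column of $T_{1,N}$.

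\textbf{The fix.} Work with $S_N$, which is $N$-periodic, and pass to half-angles only at the very end.
\begin{enumerate}
\item[(a)] Put $\widetilde T:=\frac2N S_NT_{1,N}$. Its entries lie in $\{0,\pm1\}$ by Lemma~\ref{lemma:structure-hats} and Example~\ref{ex:yzhat}.
\item[(b)] Since $S_N$ and $T_{1,N}$ are symmetric and $S_N^2=\frac N4 I_n$, you get $T_{1,N}=2\widetilde T^{\mathsf T}S_N$.
\item[(c)] Now compare first columns. There the relabelling $\sin(2\pi m/N)=\sin(\pi\sigma(m)/N)$, with $\sigma(m)\equiv\pm 2m \pmod N$ and $1\le\sigma(m)\le n$, holds with no sign: this is the case $j=1$ of the display above.
\item[(d)] Hence $T(n)=\widetilde T^{\mathsf T}P$, and
\[
t_n=\det P\,\bigl(\tfrac2N\bigr)^n\det S_N\det T_{1,N}=(-1)^{\frac{n(n-1)}{2}}\operatorname{sgn}(\sigma)\,s_n .
\]
\end{enumerate}

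\textbf{The sign.} You also left this computation open, and it is the actual content of the final claim. The permutation $\sigma$ has $\binom{\lceil n/2\rceil}{2}+\binom{\lfloor n/2\rfloor+1}{2}$ inversions. This equals $h^2$ for $n=2h$ and $h(h+1)$ for $n=2h+1$, so $\sigma$ is odd exactly when $n\equiv2\pmod4$. Combined with $(-1)^{n(n-1)/2}$, the total sign is $-1$ exactly when $n\equiv3\pmod4$.
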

\begin{proof}
(i) By Lemma~\ref{lemma:structure-hats}, the matrix $\widetilde{T}(1,N)=\frac{2}{N}S_N T_{1,N}$ has entries in $\{0, \pm 1\}$. 
By Lemma~\ref{lemma:block-structure}, its determinant is the product of group determinants of size $\varphi(N_d)/2$ indexed by $d \mid N, d\neq N$ with entries $\pm 1$.

In each block of size $\varphi(N_d) / 2$, subtract the first row from every other row; each new non-first row then has entries in $\{0, \pm 2\}$, so one factors out 2 from each of the remaining $\varphi(N_d) / 2-1$ rows. Moreover, 
$$ \sum_{\substack{d \mid N \\ d \neq N}}\left(\frac{\varphi\left(N_d\right)}{2}-1\right)=\sum_{\substack{e \mid N \\ e>1}}\left(\frac{\varphi(e)}{2}-1\right)=n+1-\tau(N).$$

Taking the product, we see 
$$\left(\frac{2}{N}\right)^n \det(S_N) \det(T_{1,N}) \in 2^{n+1-\tau(N)}\Z.$$
Plugging in the formula for $\det(S_N)$ in \eqref{eq:detS} proves the result.

(ii) By Lemma~\ref{lemma:Y-truncation} and Example~\ref{ex:yzhat}, we see that
\[
\left(\frac{2}{N}\right)^{n+1}\det(C_N)\det(T_{2,N})
\]
is the product of the group determinants indexed by \(d\mid N\), \(d\neq N\), with entries \( (-1)^{t+1}(2t-N_d)\), where \(t=uv^{-1}\) and \(u,v\in (\mathbb Z/N_d\mathbb Z)^\times/\{\pm1\}\).

Note that $(-1)^{t+1}(2t-N_d)\equiv N_d \pmod 4$ since $N_d$ is odd. As in the proof of (i), we can show
$$\left(\frac{2}{N}\right)^{n+1}\det(C_N)\det(T_{2,N}) \in 4^{n+1-\tau(N)}\Z.$$

Plugging in the formula for $\det(C_N)$ in \eqref{eq:detC} gives the desired result.

(iii) Taking the transpose of $\widetilde{T}(1,N)$, we have $\widetilde{T}(1,N)^{T}=\frac{2}{N} T_{1,N}S_N$. Multiplying both sides by $S_N$ and using the fact $S_N^2=\frac{N}{4}I_n$, we get
\begin{equation}\label{eq:TST}
2 \widetilde{T}(1,N)^{T} S_N = T_{1,N}.
\end{equation}
The first column of $S_N$ is $(\sin \frac{2 \pi k}{N})_{1 \le k \le n}^T$ which is a permutation of $(\sin \frac{\pi k}{N})_{1 \le k \le n}^T$.
Specifically, the permutation is 
\begin{equation*}
\begin{pmatrix}
1 & 2 & \cdots & n/2 & n/2+1 &  n/2+2 & \cdots & n \\
2 & 4 & \cdots & n   & n-1 &  n-3 & \cdots & 1 
\end{pmatrix} 
\quad \text{for } n \text{ even,}
\end{equation*}
and 
\begin{equation*}
\begin{pmatrix}
1 & 2 & \cdots & (n-1)/2 & (n+1)/2 & (n+3)/2 & \cdots & n \\
2 & 4 & \cdots & n-1   &  n & n-2 & \cdots & 1 
\end{pmatrix}
\quad \text{for } n \text{ odd.}
\end{equation*}
It is straightforward to check that the permutation is odd if and only if $n \equiv 2 \pmod 4$. Let $P$ be the matrix representing this permutation. 
Then the first column of $P^{-1} S_N$ is exactly $(\sin \frac{\pi k}{N})_{1 \le k \le n}^T$.
Hence we can take $T(n)$ to be $\widetilde{T}(1, N)^T P$, since comparing the first columns in \eqref{eq:TST} gives the required identity.
We have
\begin{align*}
  t_n &= \det(\widetilde{T}(1,N)) \det(P) \\
   & = (-1)^{\frac{n(n-1)}{2}}\det(P) N^{-\frac{n}{2}}\det(T_{1,N}) \\
   & = (-1)^{\frac{n(n-1)}{2}}\det(P) s_n
\end{align*}
Then the result follows from the fact that $(-1)^{\frac{n(n-1)}{2}}\det(P)$ equals $-1$ if $n \equiv 3\pmod 4$ and equals $1$ otherwise.
\end{proof}

Let $\chi$ be a Dirichlet character modulo $N_{\chi}$. Then the generalized Bernoulli number of $\chi$ is defined by
\begin{equation*}
B_{m,\chi} = N_\chi^{m-1} \sum_{a=1}^{N_{\chi}} \chi(a) B_m\left(\frac{a}{N_{\chi}}\right).
\end{equation*}
Let $\chi^*$ be the primitive Dirichlet character of conductor $f_\chi$ that induces $\chi$. 
It is well known that
\begin{equation}\label{eq:primtoimprim}
B_{m,\chi} = B_{m,\chi^*} \prod_{p \mid N_{\chi}} \Bigl(1 - \chi^*(p)p^{m-1}\Bigr).
\end{equation}


\begin{theorem} \label{thm:det_T_formulas}
Let the notation be as above. We have the following factorizations of determinants
\begin{equation*} 
\det(T_{2k,N}) = (-1)^{\frac{n(n+1)}{2}} N^{\frac{n+1}{2}} \prod_{\substack{d \mid N \\ d > 2}} \;\; \prod_{\substack{\chi \bmod d \\ \chi(-1)=1}} \widetilde{\lambda}_{\chi}^{(2k,d)},
\end{equation*}
where
\begin{equation*}
\widetilde{\lambda}_{\chi}^{(2k,d)} = \sum_{s=1}^{k} a_{k,s} \Bigl(1 - 2^{2s}\chi(2)\Bigr) B_{2s,\chi^*} \prod_{p \mid d} \Bigl(1 - \chi^*(p)p^{2s-1}\Bigr),
\end{equation*}
and
\begin{equation*} 
\det(T_{2k+1,N}) = (-1)^{\frac{n(n-1)}{2}} N^{\frac{n}{2}} \prod_{\substack{d \mid N \\ d > 2}} \;\; \prod_{\substack{\chi \bmod d \\ \chi(-1)=-1}} \widetilde{\lambda}_{\chi}^{(2k+1, d)},
\end{equation*}
where
\begin{equation*}
\widetilde{\lambda}_{\chi}^{(2k+1, d)} = \sum_{s=0}^{k} A_{k,s} \Bigl(1 - 2^{2s+1}\chi(2)\Bigr) B_{2s+1,\chi^*} \prod_{p \mid d} \Bigl(1 - \chi^*(p)p^{2s}\Bigr).
\end{equation*}
Here, $a_{k,s}$ and $A_{k,s}$ are universal rational coefficients as before, and the product over $p$ runs over all prime divisors of $d$.

As a consequence,  we have
\begin{equation*} 
\det(T_{1,N}) = (-1)^{\frac{n(n+1)}{2}} 2^n N^{\frac{n}{2}} \prod_{\substack{d \mid N \\ d > 2}} \left[ \frac{h^{-}(\Q(\zeta_d))}{2d Q_{\Q(\zeta_d)}} \prod_{\substack{\chi \bmod d \\ \chi(-1)=-1}} \left(\Bigl(2\chi(2)-1\Bigr) \prod_{p \mid d} \Bigl(1 - \chi^*(p)\Bigr)\right)\right],
\end{equation*}
where $h^{-}(\Q(\zeta_d))$ is the relative class number of $\Q(\zeta_d)$, and $Q_{\Q(\zeta_d)}$ is the Hasse unit index which equals 1 if $d$ is a prime power and 2 otherwise.
\end{theorem}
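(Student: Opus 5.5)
The plan is to specialize Theorem~\ref{thm:det-XYZ-general} to the data of Lemma~\ref{lemma: Fourier-tangent}.

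For $T_{2k,N}$ take $y_{[a]}=\tan^{2k}(\pi a/N)$. Since $\tan 0=0$, we have $y_0=0$ and $T_{2k,N}=Y_N'|_{y_0=0}$, so the third formula of Theorem~\ref{thm:det-XYZ-general} applies. For $N$ odd we have $N^+=N^-=n$, so the prefactor is $(-1)^{n(n+1)/2}N^{-(n-1)/2}$. For $T_{2k+1,N}$ take $z_{\langle a\rangle}=\tan^{2k+1}(\pi a/N)$; this is consistent with the odd extension because $\tan$ is odd and $\pi$-periodic. Then $T_{2k+1,N}=Z_N$, and the fourth formula gives prefactor $(-1)^{n(n-1)/2}N^{-n/2}$. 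After the reindexing of Remark~\ref{rk:dpair} (replace $N_d$ by $d$), the factors $d$ in front of the character sums become $N/d$ with $d$ now the modulus. It then remains to evaluate each character sum $\sum_{t\in G_d}\widehat y(t)\chi(t)$, respectively $\sum_{t\in G_d}\widehat z(t)\chi(t)$, where $\widehat y,\widehat z$ are the transforms at modulus $d$ given by Lemma~\ref{lemma: Fourier-tangent}. The condition $d\neq N$ becomes $d>1$, and $d>2$ is automatic because $N$ is odd.

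The key computation is this. Plug the lemma in and use $\sum_{t\in G_d}\chi(t)\widetilde B_{m}(t/d)=d^{1-m}B_{m,\chi}$ (imprimitive generalized Bernoulli number; terms with $\gcd(t,d)>1$ vanish since $\chi(t)=0$). Since $d$ is odd, $n+1\equiv 2^{-1}\pmod d$. Substituting $t\mapsto 2t$ gives
\[
\sum_{t}\chi(t)\widetilde B_m((n+1)t/d)=\chi(2)\sum_t\chi(t)\widetilde B_m(t/d).
\]
Hence
\[
\sum_{t\in G_d}\widehat y(t)\chi(t)=\sum_s a_{k,s}\,d^{2s}\,d^{1-2s}(1-2^{2s}\chi(2))B_{2s,\chi}=d\sum_s a_{k,s}(1-2^{2s}\chi(2))B_{2s,\chi},
\]
and the odd case is analogous. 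Converting $B_{m,\chi}$ to $B_{m,\chi^*}$ by \eqref{eq:primtoimprim} produces exactly $d\,\widetilde\lambda_\chi$. Thus each character factor equals $(N/d)\cdot d\,\widetilde\lambda_\chi=N\widetilde\lambda_\chi$.

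Next count powers of $N$. The number of even characters over $d\mid N$, $d>1$, is $\sum_{d>1}\varphi(d)/2=(N-1)/2=n$. This gives $N^{n}$, and combined with $N^{-(n-1)/2}$ it yields $N^{(n+1)/2}$. The odd case gives $N^{n}N^{-n/2}=N^{n/2}$. Including or excluding the trivial modulus $d=1$ must be handled carefully, but the theorem's product is over $d>2$, matching $d\neq N$ after reindexing. For the $B_1$ corollary, take $k=0$ with $A_{0,0}=-1$. Use the analytic class number formula in the form
\[
\prod_{\chi\ \mathrm{odd}}\left(-\tfrac12 B_{1,\chi^*}\right)=\frac{h^-(\Q(\zeta_d))}{Q\,w}, \qquad w=2d \text{ for } d \text{ odd}.
\]
Rewrite $-B_{1,\chi^*}(1-2\chi(2))=2\cdot(-\tfrac12B_{1,\chi^*})(2\chi(2)-1)$, collect the factor $2^{\varphi(d)/2}$ over all $d$ into $2^n$, and adjust the sign bookkeeping.

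The main obstacle I expect is the sign and normalization bookkeeping. This includes checking that the prefactor signs match exactly, the handling of the $\chi(2)$ substitution on $G_d$ together with the fact that the sum over $G_d$ is twice the sum over $H_d$ (already built into the theorem), and making sure the class number formula is used with the correct root-of-unity count $w$ and Hasse index.
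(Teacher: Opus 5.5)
Your proposal follows the paper's proof. You specialize Theorem~\ref{thm:det-XYZ-general} (its $Y_N'|_{y_0=0}$ and $Z_N$ formulas) and evaluate the character sums through Lemma~\ref{lemma: Fourier-tangent} and generalized Bernoulli numbers, using $n+1\equiv 2^{-1}$. You then pass to $\chi^*$ via \eqref{eq:primtoimprim} so that each factor is $N\widetilde\lambda_\chi$, count powers of $N$, and finish with the class number formula.

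The one slip is in that final step. The correct identity is $-B_{1,\chi^*}(1-2\chi(2)) = (-2)\cdot\bigl(-\tfrac12 B_{1,\chi^*}\bigr)(2\chi(2)-1)$, not $+2\cdot(\cdots)$ as you wrote. So the collected factor is $(-2)^n$ rather than $2^n$. Its $(-1)^n$ is exactly what turns the prefactor $(-1)^{n(n-1)/2}$ into the stated $(-1)^{n(n+1)/2}$, which is the sign bookkeeping you left open.
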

\begin{proof} 
Let $\chi$ be a character modulo $N_d$. Since $(n+1) \equiv 2^{-1} \pmod {N_d}$, by Lemma \ref{lemma: Fourier-tangent} and \eqref{eq:primtoimprim}, we have
\begin{align*}
d\sum_{t=1}^{N_d} \chi(t) \widehat z_d(t) &= d\sum_{t=1}^{N_d}\chi(t)\sum_{s=0}^{k} A_{k,s}\, N_d^{2s+1} \left(\widetilde{B}_{2s+1}\!\left(\frac{t}{N_d}\right) - 2^{2s+1} \widetilde{B}_{2s+1}\!\left(\frac{(n+1) t}{N_d}\right)\right) \\
&= d \sum_{s=0}^{k} A_{k,s}\, N_d^{2s+1} \Bigl(1-2^{2s+1}\chi(2)\Bigr) N_d^{-2s} B_{2s+1,\chi} \\
&= N \sum_{s=0}^{k} A_{k,s}\, \Bigl(1-2^{2s+1}\chi(2)\Bigr) B_{2s+1,\chi^*} \prod_{p \mid N_d} \Bigl(1 - \chi^*(p)p^{2s}\Bigr).
\end{align*}
Then the formula for $\det(T_{2k+1,N})$ is a direct application of Theorem \ref{thm:det-XYZ-general} and the reindexing explained in Remark~\ref{rk:dpair}.

Similarly, we compute $d\sum_{t=1}^{N_d} \chi(t) \widehat y_d(t)$, and then derive the formula for $\det(T_{2k,N})$ by the same argument.

We obtain the formula for $\det(T_{1,N})$ using the class number formula (see \cite[Proposition 4.9]{W82})
\begin{equation*}
  h^{-}(\Q(\zeta_d)) = \frac{2d Q_{\Q(\zeta_d)}}{(-2)^{\varphi(d)/2}}\prod_{\substack{\chi \bmod d \\ \chi(-1)=-1}}B_{1, \chi^*}.
\end{equation*}
\end{proof}
\begin{remark}
The same strategy also works for other trigonometric determinants using \cite{CS12}. 
For example, we can also obtain factorizations of the determinants of the following matrices
$$\left(\sec^{2k}\left(\frac{\pi i j}{N}\right)\right)_{1\le i,j \le n}, \left((-1)^{ij}\tan^{m}\left(\frac{\pi i j}{N}\right)\right)_{1\le i,j \le n}, \left((-1)^{ij}\sec^{2k}\left(\frac{\pi i j}{N}\right)\right)_{1\le i,j \le n}.$$
\end{remark}

\subsection*{Acknowledgements}
I thank the anonymous referee of \cite{BDLR24} for raising the question of the non-vanishing of $\det(B_{3,N})$, which motivated the present note.
I also thank Fran{\c{c}}ois Brunault, Rob de Jeu and Sun Zhi-Wei for very helpful discussions and comments.

This research was supported by the National Natural Science Foundation of China (Grant No.\, 12371031).

\subsection*{AI use declaration} During preliminary exploration of this work, ChatGPT-5.2 Thinking suggested using the discrete Fourier transform as a route to a block-triangular decomposition. The author independently developed all results, verified all proofs, wrote the manuscript, and takes full responsibility for its contents.

\end{document}